\documentclass[12pt]{article}

\usepackage[utf8]{inputenc}
\usepackage[T1]{fontenc}

\usepackage{amsmath,amsfonts,amssymb,amsthm}
\usepackage{multirow}

\usepackage[left=2cm,right=1.5cm,top=1.5cm,bottom=1.5cm]{geometry}

\usepackage[pdfencoding=auto, psdextra, citecolor=blue, urlcolor=blue,
linkcolor=blue, colorlinks=true, bookmarksopen=true]{hyperref}
\usepackage{tablefootnote}

\usepackage[shortlabels]{enumitem}
\setlist[enumerate,1]{label={(\alph*)},noitemsep,topsep=0.5pt}

\usepackage{comment}

\makeatletter
\newcommand\labeltext[2]{#1\def\@currentlabel{#1}\label{#2}}
\makeatother

\newcommand{\R}{\mathbb{R}}
\newcommand{\CC}{\mathbb{C}}
\newcommand{\g}{\mathfrak{g}}
\newcommand{\h}{\mathfrak{h}}
\newcommand{\sll}{\mathfrak{sl}}
\newcommand{\su}{\mathfrak{su}}
\newcommand{\sh}{\mathfrak{sh}}
\newcommand{\se}{\mathfrak{se}}
\newcommand{\SL}{\mathrm{SL}}
\newcommand{\SU}{\mathrm{SU}}
\newcommand{\id}{\mathrm{id}}
\newcommand{\A}{\mathcal{A}}

\DeclareMathOperator{\dist}{dist}
\DeclareMathOperator{\Ker}{Ker}
\DeclareMathOperator{\interior}{int}
\DeclareMathOperator{\ri}{ri}
\DeclareMathOperator{\sspan}{span}
\DeclareMathOperator{\Image}{Im}
\DeclareMathOperator{\Real}{Re}
\DeclareMathOperator{\ad}{ad}
\DeclareMathOperator{\tr}{tr}
\DeclareMathOperator{\arctg}{arctan}

\newtheorem{theorem}{Theorem}

\newtheorem{lemma}{Lemma}
\newtheorem{corollary}{Corollary}

\theoremstyle{definition}
\newtheorem{definition}{Definition}

\newtheorem{remark}{Remark}

\title{Existence of the longest arcs for left-invariant three-dimensional contact sub-Lorentzian structures}
\author{A.\,V.~Podobryaev\\
A.\,K.~Ailamazyan Program Systems Institute of RAS\\
\texttt{alex@alex.botik.ru}}
\date{}

\begin{document}

\maketitle

\begin{abstract}
The problem of finding optimal curves (the longest arcs) for sub-Lorentzian structures is an optimal control problem with an unbounded control set and a concave cost functional. The question of existence of an optimal solution is nontrivial for such problems.
We solve here this question for some left-invariant three-dimensional contact sub-Lorentzian structures, whose classification is known.
We propose sufficient conditions for the existence of the longest arcs for left-invariant (sub-)Lorentzian structures on solvable Lie groups and on the universal cover of the Lie group $\SL_2(\R)$.

\textbf{Keywords}: Sub-Lorentzian geometry, longest arc, existence of optimal solution, solvable Lie group, Killing form.

\textbf{AMS subject classification}:
49J15, 
53C50, 
53C30. 
\end{abstract}

\section{\label{sec-intoduction}Introduction}

Let us recall the definitions of a sub-Lorentzian structure on a smooth manifold, admissible curves and their lengths, sub-Lorentzian distance, and the longest arcs.

\begin{definition}
\label{def-sub-Lorentzian}
\emph{A sub-Lorentzian structure} on a smooth manifold $M$ is a smooth completely non-integrable distribution $\Delta$, endowed with a smooth Lorentzian metric $q$.
Such a structure is called \emph{contact} if the distribution $\Delta$ is contact, i.e., it is the kernel of a 1-form $\omega$ such that
the form $\Lambda^m (d\omega) \wedge \omega$ does not vanish, where $\dim{M} = 2m+1$.

Define the cone of \emph{non-spacelike} vectors $C_x = \{v \in \Delta_x \subset T_xM \, | \, q_x(v,v) \leqslant 0\}$ in the tangent space at each point $x \in M$,.
Assume that a \emph{time orientation} is given, i.e., a \emph{timelike} vector field $V$ is fixed, where $V_x \in \Delta_x$ and $q_x(V_x, V_x) < 0$ for any $x \in M$.
The cone $C_x^+ = \{v \in C_x \, | \, q_x(V_x, v) \leqslant 0 \} \subset T_xM$ is called \emph{the cone of admissible velocities}.
A curve $x : [0,t_1] \rightarrow M$ is called \emph{an admissible curve} if $\dot{x}(t) \in C^+_{x(t)}$ for almost all $t \in [0,t_1]$.
\emph{The sub-Lorentzian length} of an admissible curve is defined as
$$
\int\limits_0^{t_1}{\sqrt{|q_{x(t)}(\dot{x}(t), \dot{x}(t))|} \, dt}.
$$
\emph{The sub-Lorentzian distance} $\dist_{sL}(x_0, x_1)$ from point $x_0$ to point $x_1$ is the supremum of the lengths of admissible curves from $x_0$ to $x_1$,
if it exists. If there exists a curve on which this supremum is attained, it is called \emph{the longest arc} from $x_0$ to $x_1$.
\end{definition}

A sub-Lorentzian structure is called \emph{left-invariant} if it is defined on a Lie group and is invariant under left shifts. We can assume that the point $x_0$ is the group identity in this case.

Sub-Lorentzian structures have been actively studied recently. Both general questions~\cite{grochowski,lokutsievskiy-podobryaev} and the structure of specific sub-Lorentzian manifolds are investigated (geodesics, attainable sets, the longest arcs are described)~\cite{chang-markina-vasilev,grong-vasilev,berestovskii-gedel,sachkov-sl-martinet}. Moreover, sub-Lorentzian structures arise in control theory. They correspond to optimal control problems on manifolds with a drift and control lying in a disk, with a quadratic cost functional.

The question of existence of the longest arcs is nontrivial. Indeed, from the viewpoint of control theory, the problem of finding the longest arcs is an optimal control problem where the control set is unbounded and the cost functional is concave, so the standard Filippov theorem~\cite[Ch.~2, \S\,7, vol.~3]{filippov} is not applicable in this case.

The present paper shows the existence of the longest arcs for certain left-invariant three-dimensional contact sub-Lorentzian structures.
Such sub-Lorentzian structures are classified up to local isometry preserving time and space orientations in the work of
M.~Grochowski, A.~Medvedev and B.~Warhurst~\cite{classification}. They are specified by a set of invariants. We need to introduce some notations to define these invariants.

We may assume that the left-invariant sub-Lorentzian structure is defined on a Lie group $G$ with the help of elements of the Lie algebra $X_1, X_2 \in \g = T_{\id}G$ such that
$$
X_1, X_2 \in \Delta_{\id}, \qquad q_{\id}(X_1, X_1) = -1, \qquad q_{\id}(X_1, X_2) = 0, \qquad q_{\id}(X_2, X_2) = 1,
$$
where $\id$ denotes the identity element of the group $G$.
Let the contact form $\omega$ be such that $d\omega(X_1, X_2) = \omega([X_1, X_2]) = 1$.
Choose a vector $X_3 \in \g$ such that its left shifts form the Reeb vector field for the form $\omega$, i.e., $d\omega(X_3, \,\cdot\,) = 0$, $\omega(X_3) = -1$.
Then the commutation table is determined by the structure constants~\cite[Sec.~4.1]{classification}
\begin{equation}
\label{eq-commutators}
[X_1,X_3] = c X_1 + c_{13}^2 X_2, \qquad
[X_2,X_3] = c_{23}^1 X_1 - c X_2, \qquad
[X_1,X_2] = c_{12}^1 X_1 + c_{12}^2 X_2 + X_3.
\end{equation}
The invariants $h$, $\kappa$, and $\tau$ are given by the formulas:
$$
h = \left(
\begin{array}{cc}
c & \frac{c_{23}^1 - c_{13}^2}{2} \\
\frac{c_{13}^2 - c_{23}^1}{2} & c \\
\end{array}
\right), \qquad
\kappa = -(c_{12}^1)^2 + (c_{12}^2)^2 + \frac{c_{23}^1 + c_{13}^2}{2}, \qquad
\tau = c_{12}^1.
$$
The classification of the considered sub-Lorentzian structures is given in Table~\ref{tb-classification}~\cite[Th.~1]{classification}.
This table uses notation for three-dimensional Lie algebras from the Bianchi classification~\cite[Th.~3]{classification}, see also~\cite{alglie3d}.
The list of these Lie algebras with commutation relations is given in Table~\ref{tb-alglie3d}.
Here $\h_3$ denotes the Lie algebra of the Heisenberg group of upper unitriangular $3 \times 3$ matrices,
$\sh_2$ --- the Lie algebra of the group of isometries of Minkowski space $\R^{1,1}$,
$\se_2$ --- the Lie algebra of the group of isometries of Euclidean space $\R^2$,
and $A^+(\R)$ --- the Lie algebra of the group of affine transformations of the line $\R$.

For $h \neq 0$, the sub-Lorentzian structure is defined uniquely~\cite[Crl.~1, Prop.~4]{classification}.
If $h = 0$, then for $\kappa \neq 0$, the sub-Lorentzian structures on the Lie groups $\SL_2(\R)$ and $A^+(\R) \times \R$ are locally isometric~\cite[Th.~4]{classification},
they correspond to cases~\ref{2} and \ref{2suppl} of Table~\ref{tb-classification}, respectively.
For $\det{h} \leqslant 0$, the sub-Lorentzian structure is completely determined by the invariants $h, \kappa$, and $\tau$ (if the latter is not uniquely determined by the first two).
For $\det{h} > 0$, each set of values $h, \kappa$ corresponds to three sub-Lorentzian structures.

Let us state the main theorem on the existence of the longest arcs for left-invariant three-dimensional contact sub-Lorentzian structures.
Its assertion~(1) corresponds to the case of solvable groups and assertions~(2)--(3) correspond to the cases of semisimple groups.

\begin{theorem}
\label{th-exist3d}
\emph{(1)} For left-invariant sub-Lorentzian structures~\emph{\ref{1}}, \emph{\ref{11}--\ref{12}} \emph{(}for $\chi = \kappa$\emph{)} and \emph{\ref{13}--\ref{15}}
\emph{(}see Table~\emph{\ref{tb-classification}}\emph{)} on simply connected solvable Lie groups, for any two points $x_0$ and $x_1$ there exists the longest arc from $x_0$ to $x_1$ if and only if point $x_1$ is attainable from point $x_0$.\\
\emph{(2)} For left-invariant sub-Lorentzian structures on the simply connected covering $\widetilde{\SL}_2(\R)$ of the Lie group $\SL_2(\R)$
in case~\emph{\ref{10}} for $\kappa < \chi < 0$, for any two points $x_0$ and $x_1$ there exists the longest arc from $x_0$ to $x_1$ if and only if point $x_1$ is attainable from point $x_0$.\\
\emph{(3)} For left-invariant sub-Lorentzian structures on the Lie group $\SU_2$ \emph{(}case~\emph{\ref{9}}\emph{)} for any points $x_0$ and $x_1$ connectable by an admissible curve,
$\dist_{sL}(x_0,x_1) = +\infty$.
\end{theorem}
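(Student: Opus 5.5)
We will reduce the existence question to a compactness statement for the set of admissible curves between two points, uniform in a time parametrization, and then apply an Arzelà--Ascoli / semicontinuity argument. The standard mechanism for Lorentzian structures is global hyperbolicity. If the set $J^+(x_0)\cap J^-(x_1)$ of points lying on admissible curves from $x_0$ to $x_1$ is compact, and admissible curves inside it have uniformly bounded parameter length in a suitable parametrization, then the longest arc exists. This follows because the length functional $\int\sqrt{|q(\dot x,\dot x)|}\,dt$ is concave in $\dot x$, hence upper semicontinuous under weak convergence of velocities. So the plan is as follows.

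First, we construct a function $f$ on $G$ that is a time function: $df(v)>0$ for every nonzero $v\in C^+_x$, uniformly in the sense that $df_x(v)\geqslant \varepsilon\,\|v\|$ for a left-invariant Riemannian norm (or at least on compact sets). For the solvable groups in part~(1), we look for $f$ among linear functions $\xi\in\g^*$ composed with a global chart. On the simply connected solvable group we use exponential-type coordinates adapted to the nilradical, where $G\cong\R^3$. The condition needed is that the dual cone of $\{X_1+uX_2:|u|\leqslant1\}$, transported by the coadjoint action, contains a fixed covector. For the listed cases~\ref{1}, \ref{11}--\ref{12} and \ref{13}--\ref{15}, we check from the commutation relations~\eqref{eq-commutators} that such a covector exists. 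In these cases the $\ad$-action of $X_3$ keeps the timelike direction inside a cone, i.e. the relevant eigenvalue or rotation condition on $h$ holds. For $\widetilde{\SL}_2(\R)$ in case~\ref{10} with $\kappa<\chi<0$, we use the Killing form instead. The structure is arranged so that the admissible cone lies inside the timelike cone of the Killing form. The function $f$ is then the lift of the angle coordinate in $\widetilde{\SL}_2(\R)\to S^1$, equivalently a time function for the anti-de Sitter metric, and it increases strictly along admissible curves.

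Second, we show compactness of $J^+(x_0)\cap J^-(x_1)$. For this we need a second bound controlling the transverse coordinates by the increment of $f$. The admissible velocities are $X_1+uX_2$ with $|u|\leqslant1$, up to positive scaling. So reparametrizing by $f$ gives an affine control system with bounded control set, and Filippov's theorem applies. The trajectories on $[f(x_0),f(x_1)]$ have bounded velocities in left-invariant norm, and the attainable set from $x_0$ in bounded $f$-time is compact by completeness of left-invariant fields. Then a maximizing sequence has a uniformly convergent subsequence. The limit is admissible because the cone is closed and convex, and its length is at least the limsup by concavity (upper semicontinuity of the integral functional). Finiteness of the supremum follows from the same bounds: the length is bounded by $\int df(\dot x)/\varepsilon$. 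Together these give existence exactly when $x_1$ is attainable, which proves~(1) and~(2).

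For~(3) on $\SU_2$, compactness destroys any time function. Every left-invariant timelike field $X_1$ generates a one-parameter subgroup that is closed (or dense in a torus), so there are closed admissible timelike loops through the identity. Given an admissible curve from $x_0$ to $x_1$, we concatenate it with $n$ copies of such a timelike loop based at $x_0$. This keeps the curve admissible and adds at least $n$ times a fixed positive length, so $\dist_{sL}=+\infty$. If $\exp(tX_1)$ is not periodic, we perturb to a nearby timelike element with a periodic one-parameter subgroup; such elements are dense in the open timelike cone.

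The main obstacle is the first step: producing the uniform time function in each listed case. This requires a case-by-case analysis of how $\mathrm{Ad}$ acts on the cone, and it is where the restrictions $\chi=\kappa$ and $\kappa<\chi<0$ should enter. I would first try a linear covector $\xi$ with $\xi(X_1)>|\xi(X_2)|$ and verify invariance under $e^{t\,\ad X_3}$.
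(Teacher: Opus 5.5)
Your global mechanism is a workable substitute for the Lokutsievskiy--Podobryaev criterion (Theorem~\ref{th-exist}) that the paper cites. That mechanism is: a time function $f$ with $df(v)\geqslant\varepsilon\|v\|$ in a left-invariant norm, reparametrization by $f$, Filippov/Arzel\`a--Ascoli compactness, and upper semicontinuity of the concave length. Part~(3) is also fine, and the perturbation is unnecessary, since every one-parameter subgroup of $\SU_2$ is a circle. The gap is that you defer the case-dependent step, which is the only place where the hypotheses of the statement enter, and the criterion you sketch for it is wrong.

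In the solvable cases you need a covector $p\in[\g,\g]^0$ with $p(X_1)>|p(X_2)|$, i.e.\ $C^+_{\id}\cap[\g,\g]=0$; then $f$ is a homomorphism $G\to\R$ and your uniform bound is automatic. This is neither an ``eigenvalue or rotation condition on $h$'' nor a requirement that $\ad X_3$ keep the timelike direction in a cone. In case~\ref{11} with $\chi=\kappa$ the brackets are $[X_1,X_3]=2\chi X_2$, $[X_2,X_3]=0$, $[X_1,X_2]=X_3$. So $e^{t\ad X_3}X_1=X_1-2\chi tX_2$ leaves the cone, yet $p=x_1^*$ kills $[\g,\g]=\sspan\{X_2,X_3\}$ and works. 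For $\chi=-\kappa$, with the same $h$, one gets $[X_2,X_3]=-2\chi X_1$, so $X_1\in[\g,\g]$ and no such $p$ exists. Your closing test (invariance of $\xi$ under $e^{t\ad X_3}$) is correct only once you add $\xi([X_1,X_2])=0$. What then has to be checked is that $c=c^1_{23}=c^1_{12}=0$ in cases~\ref{1}, \ref{11}--\ref{12} ($\chi=\kappa$) and \ref{13}--\ref{15}. This is exactly what the paper reads off from the structure-constant matrices.

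For $\widetilde{\SL}_2(\R)$ you assert two facts without proof. The first is that the cone lies \emph{strictly} inside the Killing cone exactly when $\kappa<\chi<0$. This needs $K=2(\kappa+\chi)x_1^2-2(\kappa-\chi)x_2^2+2(\kappa^2-\chi^2)x_3^2$ and a check that $K<0$ on $\{x_1=1,\ |x_2|\leqslant1,\ x_3=0\}$.

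The second, more serious one, is that $dc$ is \emph{globally} uniformly positive on left-translated cones. Your hedge ``or at least on compact sets'' would not do: without a global bound, reparametrized curves may leave every compact set in finite $f$-time, and your compactness step collapses. The bound requires the explicit left translation (Lemma~\ref{lem-admissible-velocities}): $dc(L_{(c,w)*}(\xi,\zeta))=\xi+\Image(w\bar\zeta e^{-ic})/\sqrt{1+|w|^2}$. On $\xi^2\geqslant(\eta+1)|\zeta|^2$ this is $\geqslant\xi\bigl(1-(\eta+1)^{-1/2}\bigr)$. The bound is uniform only because $\eta>0$. For the Killing cone itself the lower bound is $\xi\bigl(1-|w|/\sqrt{1+|w|^2}\bigr)$, which tends to $0$ as $|w|\to\infty$, so strictness of the inclusion is essential.

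With this estimate your route does go through. It even yields a uniform bound in a left-invariant norm, slightly stronger than the sublinear-growth condition the paper verifies in Euclidean coordinates $(c,w)$. As written, however, the proposal does not establish it.
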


\begin{table}[ht]
\begin{footnotesize}
\begin{tabular}{|l|l|l|l|l|l|}
\hline
                    & $h$                                                                                                                                                     & $\kappa$                                                    & $\tau$            & Lie algebra                                                                                                                & Existence        \\ \hline\hline
\labeltext{1}{1}                   & \multirow{3}{*}{$\left(\begin{array}{cc}0 & 0 \\ 0 & 0 \\ \end{array} \right)$}                                                                         & $\kappa = 0$                                                &                   & $L(3,1) = \h_3$                                                                                                           & Yes                   \\ \cline{1-1} \cline{3-6}
\labeltext{2}{2}                   &                                                                                                                                                         & $\kappa \in \R^*$                                           &                   & $L(3,5) = \sll_2(\R)$                                                                                                         &                  \\ \cline{1-1} \cline{3-6}
\labeltext{$2^*$}{2suppl}                   &                                                                                                                                                         & $\kappa \in \R^*$                                           &                   & $L(3,-1) = A^+(\R) \times \R$                                                                                                         & Yes                \\ \hline
\labeltext{3}{3}                   & \multirow{4}{*}{$\left(\begin{array}{rr}1 & -1 \\ 1 & -1 \\ \end{array}\right), \quad \left(\begin{array}{rr} -1 & -1 \\ 1 & 1 \\ \end{array} \right)$} & $\kappa = 0$                                                & $\tau = 2$        & $L(3,3)$                                                                                                                  &                      \\ \cline{1-1} \cline{3-6}
\labeltext{4}{4}                   &                                                                                                                                                         & $\kappa = 0$                                                & $|\tau| > 2$      & $L\left(3,2,\frac{-\tau - \sqrt{\tau^2 - 4}}{-\tau + \sqrt{\tau^2 - 4}}\right)$                                           &                      \\ \cline{1-1} \cline{3-6}
\labeltext{5}{5}                   &                                                                                                                                                         & $\kappa = 0$                                                & $|\tau| < 2$      & $L\left(3,4,\frac{|\tau|}{\sqrt{4 - \tau^2}}\right)$                                                                      &                      \\ \cline{1-1} \cline{3-6}
\labeltext{6}{6}                   &                                                                                                                                                         & $\kappa \in \R^*$                                           &                   & $L(3,5) = \sll_2(\R)$                                                                                                         &                   \\ \hline
\labeltext{7}{7}                   & \multirow{2}{*}{$\left(\begin{array}{rr} 1 & 1 \\ -1 & -1 \\ \end{array}\right), \quad \left(\begin{array}{rr}-1 & 1 \\ -1 & 1 \\ \end{array}\right)$}  & $\kappa = 0$                                                & $\tau \in \R$     & $L\left(3,2,\frac{\tau - \sqrt{\tau^2 + 4}}{\tau + \sqrt{\tau^2 + 4}}\right)$                                             &                      \\ \cline{1-1} \cline{3-6}
\labeltext{8}{8}                   &                                                                                                                                                         & $\kappa \in \R^*$                                           &                   & $L(3,5) = \sll_2(\R)$                                                                                                         &                   \\ \hline
\labeltext{9}{9}                   & \multirow{5}{*}{$\left(\begin{array}{rr} 0 & -\chi \\ \chi & 0 \\ \end{array} \right), \quad \chi \neq 0$}                                              & $|\kappa| < -\chi$                                          &                   & $L(3,6) = \su_2$                                                                                                          & No\tablefootnote{The Lorentzian distance to any attainable point is $+\infty$ for any starting point.}                  \\ \cline{1-1} \cline{3-6}
\labeltext{10}{10} &                                                                                                                                                         & $|\kappa| > -\chi, \quad \chi \neq \pm \kappa$ &  & $L(3,5) = \sll_2(\R)$                                                                                        & Yes, if $\kappa < \chi < 0$ \\ \cline{1-1} \cline{3-6}
\labeltext{11}{11}                  &                                                                                                                                                         & $\chi = \pm \kappa > 0$                                     &                   & $L(3,2,-1) = \sh_2$                                                                                                       & Yes, if $\chi = \kappa$                   \\ \cline{1-1} \cline{3-6}
\labeltext{12}{12}                  &                                                                                                                                                         & $\chi = \pm \kappa < 0$                                     &                   & $L(3,4,0) = \se_2$                                                                                                        & Yes, if $\chi = \kappa$                   \\ \hline
\labeltext{13}{13}                  & \multirow{3}{*}{$\left(\begin{array}{rr} 0 & -\chi \\ \chi & 0 \\ \end{array}\right), \quad \chi \neq 0$}                                               & $\kappa = -7\chi$                                           &                   & $L(3,3)$                                                                                                                  & Yes                   \\ \cline{1-1} \cline{3-6}
\labeltext{14}{14}                  &                                                                                                                                                         & $\kappa > -7\chi$                                           &                   & $L\left(3,2,\frac{\sqrt{|\chi - \kappa|} - \sqrt{7\chi + \kappa}}{\sqrt{|\chi - \kappa|} + \sqrt{7\chi + \kappa}}\right)$ & Yes                   \\ \cline{1-1} \cline{3-6}
\labeltext{15}{15}                  &                                                                                                                                                         & $\kappa < -7\chi$                                           &                   & $L\left(3,4,\frac{\sqrt{|\kappa - \chi|}}{\sqrt{-7\chi - \kappa}}\right)$                                                 & Yes                   \\ \hline
\labeltext{16}{16}                  & \multirow{3}{*}{$\left(\begin{array}{rr} 0 & -\chi \\ \chi & 0 \\ \end{array}\right), \quad \chi \neq 0$}                                               & $\kappa = 7\chi$                                            &                   & $L(3,3)$                                                                                                                  &                      \\ \cline{1-1} \cline{3-6}
\labeltext{17}{17}                  &                                                                                                                                                         & $\kappa < 7\chi$                                            &                   & $L\left(3,2,\frac{\sqrt{|\chi + \kappa|} - \sqrt{7\chi - \kappa}}{\sqrt{|\chi + \kappa|} + \sqrt{7\chi - \kappa}}\right)$ &                      \\ \cline{1-1} \cline{3-6}
\labeltext{18}{18}                  &                                                                                                                                                         & $\kappa > 7\chi$                                            &                   & $L\left(3,4,\frac{\sqrt{|\kappa + \chi|}}{\sqrt{-7\chi + \kappa}}\right)$                                                 &                      \\ \hline
\labeltext{19}{19}                  & $\left(\begin{array}{rr} \chi & 0 \\ 0 & -\chi \\ \end{array} \right), \quad \chi \neq 0$                                                               & $\kappa \in \R$                                             &                   & $L(3,5) = \sll_2(\R)$                                                                                                         &                   \\ \hline
\end{tabular}
\end{footnotesize}
\caption{\label{tb-classification}The Grochowski-Medvedev-Warhurst classification of left-invariant three-dimensional contact sub-Lorentzian structures. It is supplemented with the last column indicating the existence of the longest arcs on the attainable set on the corresponding simply connected Lie group (Theorem~\ref{th-exist3d}).}
\end{table}

\begin{table}[h!]
\centering
\begin{small}
\begin{tabular}{|c|c|}
\hline
Lie algebra & Commutation relations \\
\hline\hline
$L(3, 0) = \R^3$ & $[E_1, E_2] = [E_2, E_3] = [E_3, E_1] = 0$ \\
\hline
$L(3,1) = \h_3$ & $[E_1, E_2] = E_3$, $[E_1, E_3] = [E_2, E_3] = 0$ \\
\hline
$L(3, -1) = A^+(\R) \oplus \R$ & $[E_1, E_2] = E_1$, $[E_1, E_3] = [E_2, E_3] = 0$ \\
\hline
$L(3,2,\eta)$, $0 < |\eta| \leqslant 1$ & $[E_1, E_2] = 0$, $[E_1, E_3] = E_1$, $[E_2, E_3] = \eta E_2$ \\
\hline
$L(3,4,\eta)$, $\eta \geqslant 0$ & $[E_1, E_2] = 0$, $[E_1, E_3] = \eta E_1 - E_2$, $[E_2, E_3] = E_1 + \eta E_2$ \\
\hline
$L(3,3)$ & $[E_1, E_2] = 0$, $[E_1, E_3] = E_1$, $[E_2, E_3] = E_1 + E_2$ \\
\hline
$L(3,5) = \sll_2(\R)$ & $[E_1, E_2] = E_1$, $[E_1, E_3] = -2E_2$, $[E_2, E_3] = E_3$ \\
\hline
$L(3,6) = \su_2$ & $[E_1, E_2] = E_3$, $[E_1, E_3] = -E_2$, $[E_2, E_3] = E_1$ \\
\hline
\end{tabular}
\end{small}
\caption{\label{tb-alglie3d}Three-dimensional Lie algebras with generators $E_1, E_2, E_3$.}
\end{table}

Theorem~\ref{th-exist3d} can be extended to the case of generalized sub-Lorentzian structures,
which have attracted interest in recent years~\cite{lokutsievskiy-podobryaev,podobryaev,ladeishchikov}.
By analogy with sub-Finsler structures~\cite{berestovslii-intistic}, defined by a norm on a distribution,
generalized sub-Lorentzian structures are defined by an anti-norm on the cone of admissible velocities,
i.e., a nonnegative concave homogeneous function of degree 1. The generalized sub-Lorentzian length functional is defined as the integral of the anti-norm of the velocity of an admissible curve.
See Definition~\ref{def-general-subLorentz} in Section~\ref{sec-maintheorems} below for more details. Let us state the corresponding assertion here.

\begin{theorem}
\label{th-exist-general}
The existence or non-existence of the longest arcs in the cases listed in Theorem~\emph{\ref{th-exist3d}} also holds for left-invariant generalized sub-Lorentzian structures given by the cone
$C^+_{\id} = \{(x_1,x_2,x_3) \in \g \, | \, -x_1^2 + x_2^2 \leqslant 0, \, x_1 > 0, \, x_3 = 0\}$ and an arbitrary upper semicontinuous anti-norm on this cone,
where $x_1,x_2,x_3$ are coordinates in the Lie algebra $\g$ associated with the basis $X_1,X_2,X_3$.
\end{theorem}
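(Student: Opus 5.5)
The plan is to check that the proof of Theorem~\ref{th-exist3d} never really uses the quadratic form of the length functional. It only uses two properties of the length: it is upper semicontinuous along suitable convergent sequences of admissible curves, and the cone $C^+_{\id}$ has a specific geometry relative to the Lie algebra structure. So I would first isolate an abstract existence criterion. Consider a left-invariant generalized sub-Lorentzian structure with a closed convex cone $C^+_{\id}$, pointed after adding the origin, and an upper semicontinuous concave homogeneous anti-norm $\nu$. Suppose admissible curves between two fixed points $x_0,x_1$ have uniformly bounded time of reparametrization, i.e.\ bounded "$\R$-length" with respect to a fixed linear functional $\ell$ that is positive on $C^+_{\id}\setminus\{0\}$. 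Then a longest arc exists whenever $x_1$ is attainable. For the proof I would parametrize curves by $\ell(\dot x)=1$, so the velocities lie in a compact convex section $K$ of the cone. The Arzel\`a--Ascoli theorem and the convexity of $K$ then give a subsequence converging uniformly, with weak-$*$ convergence of the controls; this is the Filippov argument. Next, concavity plus upper semicontinuity of $\nu$ on $K$ make the functional $u\mapsto\int\nu(u)\,dt$ weakly upper semicontinuous. The standard argument: $\nu$ is the infimum of affine functions majorizing it on $K$, so the integral of the limit is at least the limsup. Finally, since $\nu\le c\,\ell$ on $K$, the supremum is finite.

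Second, I would show that the needed uniform bound on $\ell$-length depends only on the cone, not on $\nu$. In the sub-Lorentzian proof the bound comes from a Lie-algebraic argument. In the solvable cases, one finds a left-invariant (or at least exact) 1-form $\theta$ with $\theta\geqslant \ell>0$ on the cone and $\theta=df$ for some function $f$. The increase of $f$ along admissible curves from $x_0$ to $x_1$ then bounds $\int\ell(\dot x)\,dt$ by $f(x_1)-f(x_0)$. For $\widetilde{\SL}_2(\R)$ the same role is played by a function built from the Killing form and a timelike element with $\kappa<\chi<0$. Both constructions refer only to $C^+_{\id}$, which here is exactly the cone $\{-x_1^2+x_2^2\le0,\ x_1>0,\ x_3=0\}$ of the standard structure. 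So these estimates from the proof of Theorem~\ref{th-exist3d} transfer verbatim. The "only if" direction is trivial, because attainability is defined through the cone alone.

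Third, for the $\SU_2$ case I would reproduce the infinite-distance argument. On the compact group, admissible loops exist through any point; concretely, the one-parameter subgroup generated by a timelike element is periodic. Concatenating a fixed admissible curve from $x_0$ to $x_1$ with $n$ copies of a closed timelike loop gives lengths growing like $n\int\nu(\dot\gamma)\,dt$. This requires $\nu>0$ somewhere on the loop's velocity. I expect this to be the main subtlety: an arbitrary anti-norm may vanish on large parts of the cone. But $\nu$ is concave, nonnegative, homogeneous and not identically zero. If $\nu$ were zero on the interior it would vanish everywhere by concavity, which I take to be excluded in Definition~\ref{def-general-subLorentz}. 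So $\nu$ is positive on the interior of the cone, and I would choose the loop with interior velocity (e.g.\ $X_1$). The checks needed are that the degenerate boundary behaviour of $\nu$ does not break the upper semicontinuity step and that the loop can be chosen in the interior; I would handle both as above.
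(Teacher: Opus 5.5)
The genuine gap is assertion~(2), the $\widetilde{\SL}_2(\R)$ case. Your solvable and $\SU_2$ parts are sound in outline and match the paper.

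\textbf{What is fine.} For the solvable groups the paper also uses a left-invariant form that vanishes on $[\g,\g]$ and is positive on $C^+_{\id}\setminus 0$ (Corollary~\ref{crl-exist}). What you still owe there is the linear-algebra check that $[\g,\g]^0$ meets $\interior{(C^+_{\id})^{\vee}}$ in the listed cases. For $\SU_2$ the paper uses the same loop $\exp(tX_1)$, and your remark that assertion~(3) tacitly needs $\nu\not\equiv 0$ is correct. Your one real departure is re-proving a Filippov-type criterion (bounded $\ell$-length for a left-invariant $\ell$). The paper instead cites Theorem~\ref{th-exist}, which is already stated for arbitrary regular generalized structures, so independence of the anti-norm comes for free. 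In your upper-semicontinuity step the affine majorant must be chosen depending on $t$, e.g.\ piecewise from a countable family. A single majorant only gives $\limsup\int\nu(u_n)\leqslant\inf_a\int a(u)$, which is merely $\geqslant\int\nu(u)$.

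\textbf{The gap.} You cannot import the estimate ``verbatim from the proof of Theorem~\ref{th-exist3d}'', because the paper deduces that theorem from the present one. The time function on $\widetilde{\SL}_2(\R)$ must therefore be built here, and the one you sketch cannot work. If it is expressed through the Killing form and $\mathrm{Ad}_g$, it factors through $\mathrm{PSL}_2(\R)$. But for $\kappa<\chi<0$ we have $K(X_1,X_1)=2(\kappa+\chi)<0$, so $X_1$ is elliptic. Hence $\exp(tX_1)$ closes up in $\mathrm{PSL}_2(\R)$ while being admissible with velocity in the relative interior of the cone. So such a function cannot increase along all admissible curves.

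\textbf{The missing construction.} You need a function that lives only on the universal cover.
\begin{enumerate}
\item Write $\widetilde{\SL}_2(\R)\cong\R\times\CC$ via $\SU_{1,1}$ and take $f=c$, the lifted argument of the $(1,1)$-entry.
\item Compute the left translates of the cone (Lemma~\ref{lem-admissible-velocities}).
\item Show that $C^+\subset C^+_\eta$ with $\eta>0$ implies $dc\bigl(L_{g*}(\xi,\zeta)\bigr)\geqslant\bigl(1-(1+\eta)^{-1/2}\bigr)\xi$ for every $g$.
\end{enumerate}
Such an $\eta$ exists exactly when $C^+\setminus 0$ lies in the open Killing cone. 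In case~\ref{10} this is equivalent to $\kappa<\chi<0$, by checking the sign of $K$ on the segment $x_1=1$, $|x_2|\leqslant 1$, $x_3=0$. That is where the hypothesis enters, not in the choice of $f$.

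With this inequality your own criterion applies directly, with $f=c$ and $\ell$ the left-invariant form equal to $dc$ at $\id$. It even bypasses the paper's sublinear growth estimate~\eqref{eq-estimation}. Without it, assertion~(2) is not proved.
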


Theorem~\ref{th-exist3d} is a particular case of Theorem~\ref{th-exist-general}. So, we will prove the latter.
The assertions of Theorem~\ref{th-exist-general} corresponding to assertions~(1)--(3) of Theorem~\ref{th-exist3d} we call also assertions~(1)--(3).

\begin{remark}
\label{rem-su2}
Assertion~(3) of Theorems~\ref{th-exist3d}, \ref{th-exist-general} follows immediately from the fact that on the Lie group $\SU_2$ there exist closed timelike trajectories passing through any point. For example, the circle $S^1 = \{\exp{(tX_1)} \, | \, t \in \R_+\}$ passing through the group identity can be translated to any given point via a left shift.
Including sequential multiple traversals of a closed timelike curve into an admissible curve leads to unbounded growth of the length of the admissible curve.
\end{remark}

Thus, this paper is dedicated to the proof of assertions~(1)--(2) of Theorems~\ref{th-exist3d}, \ref{th-exist-general}.
Section~\ref{sec-maintheorems} provides some preliminary information on generalized (sub-)Lorentzian structures. There we state a sufficient condition for the existence of the longest arc from the work~\cite{lokutsievskiy-podobryaev} in Theorem~\ref{th-exist}. Then, we specify this sufficient condition for left-invariant (sub-)Lorentzian structures on Lie groups (Corollary~\ref{crl-exist}).
This corollary states that if the cone of admissible velocities intersects the commutator of the Lie algebra only at zero, then the longest arcs leading to attainable points exist.
In Section~\ref{sec-solv}, we apply this condition to left-invariant sub-Lorentzian structures on three-dimensional solvable Lie groups, i.e., we prove assertions~(1) of Theorems~\ref{th-exist3d}, \ref{th-exist-general}.
Finally, in Section~\ref{sec-sl2}, we prove a sufficient condition for the existence of the longest arcs for sub-Lorentzian structures on the group $\widetilde{\SL}_2(\R)$ (Theorem~\ref{th-sl2}), then we apply it to prove assertions~(2) of the main Theorems~\ref{th-exist3d}, \ref{th-exist-general}.

In the cases of unfilled cells in the last column of Table~\ref{tb-classification}, the existence conditions for the longest arcs proposed in this paper (Corollary~\ref{crl-exist} and Theorem~\ref{th-sl2}) are not applicable. These cases require a separate investigation.

\section{\label{sec-maintheorems}Preliminaries}

This section gives a condition for the existence of the longest arcs for generalized left-invariant (sub-) Lorentzian structures.
Below we state the problem of finding the longest arcs as an optimal control problem.

\begin{definition}
\label{def-general-subLorentz}
Consider a finite-dimensional smooth real manifold $M$.
Suppose that in each of its tangent spaces, \emph{an acute cone} $C^+_x \subset T_xM$ is given (i.e., it does not contain any nonzero subspaces) and
an associated upper semicontinuous \emph{anti-norm} $\nu_x : T_xM \rightarrow \R \sqcup \{-\infty\}$, i.e., the following conditions hold:
$$
\begin{array}{lcl}
\nu_x|_{C^+_x} \geqslant 0, & \qquad & \forall\, v \in T_xM \ \, \forall\, \lambda > 0 \qquad \nu_x(\lambda v) = \lambda\nu_x(v),\\
\nu_x|_{T_xM \setminus C^+_x} = -\infty, & \qquad & \forall\, v,w \in T_xM \qquad \nu_x(v+w) \geqslant \nu_x(v) + \nu_x(w).\\
\end{array}
$$
It can be shown (for details see~\cite[Def.~3]{lokutsievskiy-podobryaev}) that in this case the cone $C^+_x$ is convex and closed.
Moreover, if $\nu_x|_{C^+_x} \not\equiv 0$, then $\nu_x|_{\ri{C^+_x}} > 0$, where $\ri{C^+_x}$ is the relative interior of the cone $C^+_x$.
The collection $\{(C^+_x, \nu_x) \, | \, x \in M\}$ is called \emph{a generalized sub-Lorentzian structure} on the manifold $M$.
\end{definition}

\begin{definition}
\label{def-regular}
A generalized sub-Lorentzian structure is called \emph{regular} if the set $C^+ = \bigcup\limits_{x \in M}{C^+_x} \subset TM$ is closed, and
the function $\nu : TM \rightarrow \R \sqcup \{-\infty\}$, acting on the fiber $T_xM$ as $\nu_x$, is upper semicontinuous.
\end{definition}

We need to find a Lipschitz curve $x : [0,t_1] \rightarrow M$ such that
\begin{equation}
\label{eq-control-problem}
x(0) = x_0, \qquad x(t_1) = x_1, \qquad \dot{x}(t) \in C^+_{x(t)} \setminus 0 \text{ for a.e. } t \in [0,t_1], \qquad \int\limits_0^{t_1}{\nu(\dot{x}(t))\, dt} \rightarrow \max,
\end{equation}
where $x_0, x_1 \in M$ and $t_1 > 0$ are fixed.

If a nondegenerate bilinear form $q$ with signature $(1, \dim{M}-1)$ is given on the manifold $M$, then by fixing a timelike vector field $V$,
we can define the cone $C^+_x = \{v \in T_xM \, | \, q_x(v,v) \leqslant 0, \, q_x(v, V_x) \leqslant 0\}$ and the anti-norm $\nu_x|_{C^+_x}(v) = \sqrt{|q_x(v, v)|}$.
In this case, problem~\eqref{eq-control-problem} is the classical left-invariant Lorentzian problem~\cite{beem-ehrlich-easley}.

The statement of problem~\eqref{eq-control-problem} also includes the sub-Lorentzian case, i.e, the cone $C^+_x$ may have empty interior
(be contained in a proper subspace $\Delta_x$ of the tangent space $T_xM$).

The Filippov theorem~\cite[Ch.~2, \S\,7, vol.~3]{filippov} is not directly applicable (admissible velocities are unbounded, and the integrand of the cost functional is concave) to solve the question of existence of an optimal solution of problem~\eqref{eq-control-problem}.

In recent work~\cite{lokutsievskiy-podobryaev}, a sufficient condition for the existence of the longest arcs for regular generalized (sub-)Lorentzian structures on smooth manifolds is proposed.

\begin{theorem}[\cite{lokutsievskiy-podobryaev}, Th.~1]
\label{th-exist}
Consider a complete Riemannian manifold $M$ endowed with a regular generalized sub-Lorentzian structure. Assume that $H^1(M) = 0$. Fix a point $x_0 \in M$.
If there exists a 1-form $\tau$ such that\\
\emph{(1)} $\tau_x|_{C^+_x \setminus 0} > 0$ for any $x \in M$,\\
\emph{(2)} $\frac{|v|}{\tau_x(v)} \leqslant 1 + \dist_R(x_0, x)$ for any $x \in M$ and $v \in T_xM \setminus 0$, where $\dist_R$ is the Riemannian distance, and $|\cdot|$ is the norm in the tangent space corresponding to the Riemannian structure,\\
\emph{(3)} $d\tau = 0$,\\
then a solution of problem~\emph{\eqref{eq-control-problem}}, i.e., the longest arc from the point $x_0$ to a point $x_1$ exists if and only if the point $x_1$ lies in the attainable set from the point $x_0$.
\end{theorem}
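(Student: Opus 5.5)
The plan is the standard compactness scheme: a priori bounds on admissible curves, extraction of a convergent subsequence from a maximizing sequence, and upper semicontinuity of the length functional. The role of the closed form $\tau$ is to give the missing a priori bounds. Throughout, fix $x_1$ in the attainable set of $x_0$, so the supremum $L$ in problem~\eqref{eq-control-problem} is taken over a nonempty family. The "only if" direction is trivial.

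\emph{Step 1: a potential controls all curves.} Since $d\tau = 0$ and $H^1(M)=0$, we have $\tau = df$ for a smooth function $f$ with $f(x_0)=0$. Along any admissible curve $x(\cdot)$ from $x_0$ to $x_1$,
$$
\int_0^{t_1}\tau(\dot x)\,dt = f(x_1)-f(x_0)=:T,
$$
and by condition~(1) the integrand is nonnegative. I will reparametrize every admissible curve by the "$\tau$-time", i.e. so that $\tau(\dot x)\equiv 1$, which is possible because $\tau>0$ on $C^+_x\setminus 0$. All curves then live on the common interval $[0,T]$.

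\emph{Step 2: equi-Lipschitz bounds.} By condition~(2), $|\dot x|\leqslant 1+\dist_R(x_0,x(t))$. Set $r(t)=\dist_R(x_0,x(t))$. Then $\dot r\leqslant |\dot x| \leqslant 1+r$, so Gronwall's inequality gives $r(t)\leqslant e^{T}-1$. Hence all curves stay in a fixed closed ball, which is compact by completeness and Hopf--Rinow, and they are uniformly Lipschitz with constant $e^{T}$.

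\emph{Step 3: bounded length and a convergent maximizing sequence.} By homogeneity and compactness, the anti-norm satisfies $\nu_x(v)\leqslant c\,\tau_x(v)$ on this ball. Indeed, $\nu$ is upper semicontinuous, hence bounded above on the compact set $\{v\in C^+ : \tau(v)=1\}$ over the ball; this set is compact because $C^+$ is closed, $\tau>0$ on it, and by Step~2. Therefore $L\leqslant cT<\infty$. Take a maximizing sequence. By Arzel\`a--Ascoli, it has a uniformly convergent subsequence with limit $\bar x$, and the derivatives converge weakly-$*$ in $L^\infty$.

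\emph{Step 4: admissibility and semicontinuity, the main obstacle.} I would show that $\dot{\bar x}(t)\in C^+_{\bar x(t)}$ a.e. and that
$$
\limsup_n\int\nu(\dot x_n)\,dt\leqslant\int\nu(\dot{\bar x})\,dt .
$$
Both follow from a Mazur-type argument. Convex combinations of $\dot x_n$ converge a.e. to $\dot{\bar x}$. The set $C^+$ is closed and each fiber is convex, while $\nu$ is concave in the fiber and upper semicontinuous on $TM$ (regularity). To handle the dependence on the base point, I would work in local charts with the closed-graph property of the regular structure, or equivalently use the hypograph $\{(x,v,s): s\leqslant\nu_x(v)\}$, which is closed and convex in $(v,s)$, and apply the standard lower-closure theorem of Cesari type. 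Finally, $\tau(\dot{\bar x})=1$ a.e. because $\tau$ is linear, so $\dot{\bar x}\neq 0$. Thus $\bar x$ is an admissible curve from $x_0$ to $x_1$ with length at least $L$, i.e. it is the longest arc.
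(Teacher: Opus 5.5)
The paper does not prove this statement. It is imported from the work of Lokutsievskiy and Podobryaev (Th.~1 there) and used as a black box in Corollary~\ref{crl-exist} and Theorem~\ref{th-sl2}, so there is no argument in the paper to compare yours with. Judged on its own, your proof is correct and takes the natural route. It also addresses exactly the Introduction's reasons why Filippov's theorem fails. Reparametrizing by the potential $f$ with $df=\tau$ fixes the terminal time at $T=f(x_1)-f(x_0)$. It replaces the unbounded cones by the convex sections $\{v\in C^+_x : \tau_x(v)=1\}$. Condition~(2), which the proof of Theorem~\ref{th-sl2} calls the ``sublinear growth condition'', together with Gronwall makes these sections uniformly bounded over a compact ball. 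The concave integrand is then absorbed into the hypograph, and what remains is a Filippov--Cesari type existence argument.

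When writing it out, make the following points explicit.

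(i) Condition~(2) is only meaningful, and only used, for $v\in C^+_x\setminus 0$.

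(ii) The inverse of $s(t)=\int_0^t\tau(\dot x)\,dt'$ is absolutely continuous because $\tau(\dot x)>0$ a.e. The length is unchanged by the homogeneity of $\nu$.

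(iii) In Step~4, first extract a weak-$*$ limit $g$ of $\nu(\dot x_n)\in[0,c]$, so that $\int_0^T g\,dt=L$. Then apply Mazur to the pairs $(\dot x_n,\nu(\dot x_n))$. What you need after that is Cesari's property (Q) for $Q(y)=\{(v,s) : v\in C^+_y,\ |v|\leqslant e^T,\ s\leqslant\nu_y(v)\}$. A closed graph with convex fibres does not by itself give (Q): convex combinations of points escaping to infinity can land outside the limit fibre. Here (Q) does hold, thanks to your bound $|v|\leqslant e^T$.
\begin{itemize}
\item By Carath\'eodory, write the approximating point as a combination of boundedly many $(v_i,s_i)$ over base points $z_i\to y$.
\item Replace each $s_i$ by $\nu_{z_i}(v_i)\in[0,c]$ and pass to limits.
\item Use closedness of $C^+$, upper semicontinuity of $\nu$, and concavity of $\nu_y$.
\end{itemize}
This gives $\dot{\bar x}\in C^+_{\bar x}$ and $g\leqslant\nu(\dot{\bar x})$ a.e.

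(iv) The identity $\tau(\dot{\bar x})=1$ a.e. follows most cleanly from $f(\bar x(b))-f(\bar x(a))=\lim_n\bigl(f(x_n(b))-f(x_n(a))\bigr)=b-a$. In particular, it shows $\dot{\bar x}\neq 0$ a.e., as problem~\eqref{eq-control-problem} requires.
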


\begin{remark}
\label{rem-time-orientation}
In the case of a cone defined by a quadratic form $C_x = \{v \in T_xM \, | \, q_x(v,v) \leqslant 0\}$, the 1-form $\tau$ selects a closed convex future cone $C^+_x$,
whose relative interior lies on one side of the hyperplane $\Ker{\tau_x}$.
Therefore, the 1-form $\tau$ is called \emph{a time orientation form}.
\end{remark}

The conditions of Theorem~\ref{th-exist} simplify significantly for left-invariant sub-Lorentzian structures and left-invariant time orientation forms.

\begin{corollary}
\label{crl-exist}
Consider a left-invariant generalized sub-Lorentzian structure on a Lie group $G$.
Assume that $H^1(G) = 0$ and $C^+_{\id} \cap [\g, \g] = 0$, where $\g$ is the corresponding Lie algebra.
Then the longest arc from the point $\id$ to a point $x_1 \in G$ exists if and only if
the point $x_1$ lies in the attainable set from the point $\id$.
\end{corollary}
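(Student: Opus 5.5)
We will deduce the corollary from Theorem~\ref{th-exist} by building a left-invariant closed 1-form $\tau$ that is positive on the cone. A left-invariant 1-form is given by a covector $\tau_{\id} \in \g^*$. For left-invariant forms, $d\tau(X,Y) = -\tau_{\id}([X,Y])$ for $X,Y \in \g$, so $d\tau = 0$ exactly when $\tau_{\id}$ vanishes on $[\g,\g]$. Condition~(3) of Theorem~\ref{th-exist} therefore becomes: $\tau_{\id}$ factors through the quotient $\pi : \g \to \g/[\g,\g]$.

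Condition~(1) requires $\tau_{\id} > 0$ on $C^+_{\id}\setminus 0$. To get such a $\tau_{\id}$ we use the hypothesis $C^+_{\id} \cap [\g,\g] = 0$. The cone $C^+_{\id}$ is closed, convex and acute, so it has a compact base $K = C^+_{\id} \cap \{|v| = 1\}$ for any Euclidean norm. The hypothesis says $\pi$ is injective on the cone directions. Hence $\pi(C^+_{\id})$ is a closed convex cone in $\g/[\g,\g]$, closed because $\pi(K)$ is compact and avoids $0$.

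This image cone is also acute. Suppose $\pi(v) + \pi(w) = 0$ with $v,w \in C^+_{\id}$. Then $v + w \in C^+_{\id} \cap [\g,\g] = 0$, and acuteness of $C^+_{\id}$ forces $v = w = 0$.

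By the standard separation theorem for a closed acute convex cone, there is a linear functional $\bar\tau$ on $\g/[\g,\g]$ that is strictly positive on $\pi(C^+_{\id})\setminus 0$. We set $\tau_{\id} = \bar\tau\circ\pi$ and extend it by left translation. Then $\tau$ is closed, and for every $x$ it is positive on $C^+_x \setminus 0 = dL_x(C^+_{\id}\setminus 0)$.

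Next we equip $G$ with the left-invariant Riemannian metric given by a Euclidean norm on $\g$; it is complete. By compactness of $K$ and positivity of $\tau_{\id}$ there, $m = \min_K \tau_{\id} > 0$. Rescaling $\tau$ by $1/m$ gives $|v|/\tau_x(v) \leqslant 1 \leqslant 1 + \dist_R(\id,x)$ for all $v \in C^+_x\setminus 0$, uniformly in $x$ by left-invariance.

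Condition~(2) as literally written quantifies over all $v \neq 0$, but for $v$ outside the cone $\tau_x(v)$ may be negative. The intended reading, as in~\cite{lokutsievskiy-podobryaev}, is $v \in C^+_x\setminus 0$, and that is what we have checked.

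The structure is regular: $C^+ = \bigcup_x dL_x C^+_{\id}$ is closed, and $\nu$ is upper semicontinuous, because in the trivialization $TG \cong G\times\g$ both are constant in $x$. Together with $H^1(G)=0$, all hypotheses of Theorem~\ref{th-exist} hold with $x_0 = \id$, which gives the conclusion.

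The main obstacle is showing that the image cone in the quotient is closed and acute, so that a strictly positive functional exists. This is exactly where the compactness of the base $K$ and the hypothesis $C^+_{\id}\cap[\g,\g]=0$ enter.
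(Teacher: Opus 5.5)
Your proof is correct and follows the paper's route. Both arguments use a left-invariant 1-form whose value at the identity annihilates $[\g,\g]$ and is positive on $C^+_{\id}\setminus 0$; it is closed and satisfies conditions~(1)--(2) of Theorem~\ref{th-exist} for a complete left-invariant metric, and regularity is automatic. You also supply details the paper leaves implicit here: the closed acute image cone in $\g/[\g,\g]$ (which the paper proves later as Lemma~\ref{lem-dual}) and the rescaling of $\tau$ needed to get the constant $1$ in condition~(2). Your reading of condition~(2) as restricted to $v \in C^+_x\setminus 0$ matches how the paper itself checks it in Theorem~\ref{th-sl2}.
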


\begin{proof}
Note that on a Lie group there exists a left-invariant Riemannian metric, which is complete~\cite[Lemma~5.4]{beem-ehrlich-easley}.
Moreover, for left-invariant generalized sub-Lorentzian structures, the regularity condition holds automatically.

It remains to construct a suitable time orientation 1-form $\tau$ to apply Theorem~\ref{th-exist}.
We take a left-invariant 1-form such that $\Ker{\tau_{\id}} \supset [\g,\g]$ and $\tau_{\id}|_{C^+_{\id} \setminus 0} > 0$ as a time orientation form,
i.e., as a form satisfying condition~(1) of Theorem~\ref{th-exist}. This is possible due to $C^+_{\id} \cap [\g, \g] = 0$.

For a left-invariant 1-form $\tau$ and a left-invariant Riemannian metric, condition~(2) is satisfied.

It remains to note that for left-invariant vector fields $V, W$, we have $d\tau(V, W) = \tau_{\id}([V_{\id}, W_{\id}]) = 0$, since $\Ker{\tau_{\id}} \supset [\g,\g]$.
Hence, the form $\tau$ is closed and condition~(3) of Theorem~\ref{th-exist} is satisfied as well.
\end{proof}

\begin{remark}
\label{rem-small-big}
Clearly, if the longest arcs exist for a left-invariant generalized sub-Lorentzian structure given by a cone $C^+ \subset \g$, then
the longest arcs also exist for a structure given by a cone $c^+ \subset C^+$.
Moreover, under the conditions of Corollary~\ref{crl-exist}, the existence of the longest arcs does not depend on the choice of an anti-norm.
\end{remark}

\begin{remark}
\label{rem-solv-semisimple}
Note that a left-invariant closed time orientation 1-form can exist for left-invariant sub-Lorentzian structures on solvable Lie groups.
Such structures are listed below in Section~\ref{sec-solv}. But a left-invariant closed time 1-form cannot exist for semisimple Lie groups, since in this case the Lie algebra coincides with its commutator $\g = [\g,\g]$. Therefore, in Section~\ref{sec-sl2}, we construct a non-left-invariant closed time orientation 1-form
on the universal cover of the group $\SL_2(\R)$ under certain constraints for which we specifically verify condition~(2) of Theorem~\ref{th-exist}.
\end{remark}

\begin{remark}
\label{rem-simpleconditions}
The conditions of Theorem~\ref{th-exist} and especially Corollary~\ref{crl-exist} are simpler than the classical condition for existence of the longest arcs
(the condition of global hyperbolicity~\cite{beem-ehrlich-easley}, generalized also to the sub-Lorentzian case in papers~\cite{grochowski,sachkov}), that
requires to check compactness of the set $\A_{x_0}^+ \cap \A_{x_1}^-$, where $\A_{x_0}^+$ and $\A_{x_1}^-$ are the attainable sets from points $x_0$, $x_1$ under forward and backward motion, respectively. We illustrate this with the present work.
\end{remark}

\section{\label{sec-solv}Sub-Lorentzian structures on three-dimensional solvable Lie groups}

We need some notations and two technical lemmas.

For a cone $C \subset V$ in a vector space $V$, we denote by $\interior{C}$ its interior,
by $C^{\vee} = \{p \in V^* \, | \, p|_C \geqslant 0\}$ the dual cone,
and by $U^{0} \subset V^*$ the annihilator of a subspace $U \subset V$.

\begin{lemma}
\label{lem-ri}
For an acute cone $C$, we have $\interior{(C^{\vee})} = \{p \in V^* \, | \, p|_{C \setminus 0} > 0\} \neq \varnothing$.
\end{lemma}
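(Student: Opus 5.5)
We have to prove two things: that $\interior(C^{\vee})$ equals $P := \{p \in V^* \mid p|_{C\setminus 0} > 0\}$, and that this set is nonempty. Throughout, $C$ is a closed convex acute cone in a finite-dimensional space $V$; closedness and convexity hold for the cones we use, as noted after Definition~\ref{def-general-subLorentz}. Fix any norm on $V$ and let $S = \{v \in C \mid |v| = 1\}$. Since $C$ is closed, $S$ is compact.

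\textbf{Inclusion $P \subset \interior(C^{\vee})$.} Take $p \in P$. Because $S$ is compact and $p$ is continuous and positive on $S$, we have $m = \min_S p > 0$. For any $q$ with $\|q - p\| < m$ (dual norm) and any $v \in S$, we get $q(v) \geqslant p(v) - \|q-p\| > 0$. By homogeneity, $q|_{C\setminus 0} > 0$. So a whole ball around $p$ lies in $C^{\vee}$, and $p \in \interior(C^{\vee})$.

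\textbf{Inclusion $\interior(C^{\vee}) \subset P$.} Take $p$ in $\interior(C^{\vee})$ and suppose $p(v) = 0$ for some $v \in C\setminus 0$. Choose $q \in V^*$ with $q(v) < 0$. For small $\varepsilon > 0$, $p + \varepsilon q$ still lies in $C^{\vee}$ by interiority. But $(p+\varepsilon q)(v) < 0$, a contradiction.

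\textbf{Nonemptiness.} This is the only substantive step, and it is where acuteness is used. I would apply a separation argument. The convex hull $K = \mathrm{conv}(S)$ is compact, since it is the convex hull of a compact set in finite dimensions.

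First, $0 \notin K$. Suppose instead $0 = \sum \lambda_i v_i$ with $v_i \in S$, $\lambda_i > 0$. Then $-\lambda_1 v_1 = \sum_{i \geqslant 2} \lambda_i v_i \in C$. Hence both $v_1$ and $-v_1$ lie in $C$, so $C$ contains the line $\R v_1$, contradicting acuteness.

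Now strictly separate the point $0$ from the compact convex set $K$. This gives $p \in V^*$ with $p > 0$ on $K \supset S$, and therefore $p|_{C\setminus 0} > 0$, so $P \neq \varnothing$. If $C = 0$, the statement is trivial: every $p$ works and $C^{\vee} = V^*$.
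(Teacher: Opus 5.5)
Your proof is correct. Two of its three parts match the paper. For $\interior(C^{\vee}) \subset P$ you perturb $p$ by $\varepsilon q$ just as the paper does. For $P \subset \interior(C^{\vee})$ you take $\min_S p$ directly, whereas the paper argues by contradiction with $q_n \to p$ and unit vectors $u_n \in C$; both rest on the compactness of $S$.

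The real difference is nonemptiness. The paper proves $\interior(C^{\vee}) \neq \varnothing$ directly: otherwise the convex cone $C^{\vee}$ lies in a proper subspace $W$, and then $W^{0} \subset C^{\vee\vee} = C$ is a nonzero subspace inside $C$. This is short, but it relies on two facts from convex analysis: a convex set with empty interior lies in a hyperplane, and the bipolar theorem $C^{\vee\vee} = C$. You instead exhibit an element of $P$. You first check, using convexity and acuteness, that $0 \notin \mathrm{conv}(S)$, then strictly separate $0$ from this compact set, and finally get $\interior(C^{\vee}) \neq \varnothing$ from the equality already proved. This needs only separation of a point from a compact convex set (e.g.\ via the nearest point to $0$), so it avoids the bipolar theorem and is more self-contained.

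Your explicit standing assumption that $C$ is closed and convex is also well placed. The paper uses both tacitly: closedness in the limit $u \in C \setminus 0$, and both properties in $C^{\vee\vee} = C$. The lemma fails without closedness: for $C = \{(x,y) \mid y > 0\} \cup \{0\} \subset \R^2$ one has $\interior(C^{\vee}) = \varnothing$ while $P \neq \varnothing$.
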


\begin{proof}
First note that if the cone $C$ is acute, then $\interior{(C^{\vee})} \neq \varnothing$.
Indeed, otherwise the dual cone $C^{\vee}$ lie in some proper subspace,
whose annihilator is contained in the cone $C$, in contradiction with its acuteness.

Let $p \in \interior{(C^{\vee})}$. Let us show that $p|_{C \setminus 0} > 0$.
Assume by contradiction, that there exists $u \in C \setminus 0$ such that $p(u) = 0$.
Since $p \in \interior{(C^{\vee})}$, for any $r \in V^*$ there exists $\varepsilon > 0$ such that $p \pm \varepsilon r \in C^{\vee}$, i.e.,
$p(u) \pm \varepsilon r(u) = \pm \varepsilon r(u) \geqslant 0$. Hence, $r(u) = 0$ for any $r \in V^*$. It follows that $u = 0$, we get a contradiction.

Conversely, let $p|_{C \setminus 0} > 0$. Suppose by contradiction that $p \notin \interior{(C^{\vee})}$, i.e.,
for $\varepsilon_n = \frac{1}{n} > 0$ there exists $q_n \in V^*$ such that $|p - q_n| < \varepsilon_n$ and $q_n \notin C^{\vee}$,
where $|\,\cdot\,|$ denotes the Euclidean norm in the space $V^*$.
This means that there exists $u_n \in C \setminus 0$ such that $q_n(u_n) < 0$.
We can assume that $|u_n| = 1$. Passing to a subsequence converging to an element $u \in C \setminus 0$,
we obtain that $q_n(u_n)$ converges to $p(u)$, so $p(u) \leqslant 0$. We get a contradiction.
\end{proof}

\begin{lemma}
\label{lem-dual}
Consider a closed convex acute cone $C$ in a vector space $V$ and a subspace $U \subset V$.
Then the following conditions are equivalent\emph{:}\\
\emph{(1)} $C \cap U = 0$,\\
\emph{(2)} $\interior{(C^{\vee})} \cap U^{0} \neq \varnothing$.
\end{lemma}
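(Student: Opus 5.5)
The implication (2)$\Rightarrow$(1) is immediate from Lemma~\ref{lem-ri}. Take $p \in \interior{(C^{\vee})} \cap U^{0}$. By Lemma~\ref{lem-ri}, $p(u) > 0$ for every $u \in C \setminus 0$. Since $p \in U^0$, $p$ vanishes on $U$. Hence no nonzero vector of $C$ lies in $U$, i.e., $C \cap U = 0$.

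For (1)$\Rightarrow$(2) I would use a separation argument, reducing to a compact set. Fix a Euclidean norm on $V$ and let $K = \{u \in C \,|\, |u| = 1\}$. Since $C$ is closed, $K$ is compact, and it is disjoint from $U$ by (1). The approach is to pass to the quotient $\pi : V \to V/U$. The key claim is that $\pi(C)$ is a closed convex acute cone in $V/U$.

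Convexity of $\pi(C)$ is clear. To prove closedness and acuteness, I would use that $\pi|_C$ is proper. Its kernel meets $C$ only at $0$, so by compactness of $K$ there is $\delta > 0$ with $\dist(u, U) \geqslant \delta |u|$ for all $u \in C$. Thus $|\pi(u)| \geqslant \delta |u|$ in the quotient norm. From this bound, any sequence $\pi(u_n) \to w$ has $u_n$ bounded, so a subsequence of $u_n$ converges in $C$ and $w \in \pi(C)$; this gives closedness. For acuteness, suppose $\pi(C) \ni \pi(u_1) = -\pi(u_2)$ with $u_1, u_2 \in C$. Then $u_1 + u_2 \in C \cap U = 0$, so $u_1 = -u_2 \in C \cap (-C) = 0$, because $C$ is acute. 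Hence $\pi(C)$ contains no line, and being a convex cone, it contains no nonzero subspace.

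Next I apply Lemma~\ref{lem-ri} to the acute cone $\pi(C) \subset V/U$. This gives a functional $\bar p \in (V/U)^*$ with $\bar p > 0$ on $\pi(C) \setminus 0$. Set $p = \bar p \circ \pi$. Then $p \in U^0$, and for $u \in C \setminus 0$ we have $\pi(u) \neq 0$ by (1), so $p(u) = \bar p(\pi(u)) > 0$. By Lemma~\ref{lem-ri} again, now in $V$, this means $p \in \interior{(C^{\vee})}$. Hence $p \in \interior{(C^{\vee})} \cap U^{0}$, which is (2).

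The main obstacle is verifying that $\pi(C)$ is closed and acute. This rests on the compactness estimate $|\pi(u)| \geqslant \delta |u|$, where the closedness of $C$ and the hypothesis $C \cap U = 0$ are really used. Everything else follows from Lemma~\ref{lem-ri}.
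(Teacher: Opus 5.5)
Your proof is correct. For (1)$\Rightarrow$(2) it follows the same route as the paper: pass to $\pi : V \to V/U$, show that $\pi(C)$ is a closed convex acute cone, apply Lemma~\ref{lem-ri} there, and pull the functional back.

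The main difference in this direction is that you actually prove $\pi(C)$ is closed, via the estimate $|\pi(u)| \geqslant \delta |u|$ on $C$. The paper only says closedness ``follows from the linearity of $\pi$''. Linearity alone does not suffice: projecting a circular cone along one of its boundary rays gives an open half-plane plus the origin. The closedness is genuinely needed, because the proof of Lemma~\ref{lem-ri} uses $C^{\vee\vee} = C$ and a limit argument in $C$. You also state explicitly that $\pi(u) \neq 0$ for $u \in C \setminus 0$. The paper uses this silently when it concludes $p \in \interior{(C^{\vee})}$.

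For (2)$\Rightarrow$(1) your argument is different and shorter. The paper uses bi-duality: a nonzero $u \in U$ vanishes at the interior point $p$ of $C^{\vee}$, so it changes sign on $C^{\vee}$, hence $u \notin C^{\vee\vee} = C$. You instead use the inclusion $\interior{(C^{\vee})} \subset \{p \,|\, p|_{C \setminus 0} > 0\}$ from Lemma~\ref{lem-ri} directly. The proof of that half of Lemma~\ref{lem-ri} needs neither closedness nor acuteness, so your argument avoids the bipolar theorem entirely.
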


\begin{proof}
(1) $\Rightarrow$ (2).
Consider the projection $\pi : V \rightarrow V/U$.
It follows from the linearity of the map $\pi$ that $\pi(C)$ is a closed convex cone.
Moreover, this cone is acute. Indeed, otherwise there exists $v+U \in \pi(C)$ such that $-v+U \in \pi(C)$.
Then in the preimages of these elements under the map $\pi$, there exist elements $c_1, c_2 \in C$ (respectively) such that
$c_1 + c_2 \in U$. If $c_1+c_2 = 0$, then the cone $C$ contains the line $\sspan{\{c_1\}}$ and is not acute.
If $c = c_1+c_2 \neq 0$, then $c \in C \cap U \neq 0$, we get a contradiction.

Further, due to the acuteness of the cone $\pi(C)$, by Lemma~\ref{lem-ri} there exists $p \in (V/U)^* \cong U^{0}$ such that $p|_{\pi(C) \setminus 0} > 0$.
Then $p \in \interior{(C^{\vee})} \cap U^{0} \neq \varnothing$.

(2) $\Rightarrow$ (1).
The condition $\interior{(C^{\vee})} \cap U^{0} \neq \varnothing$ is equivalent to the existence of $p \in \interior{(C^{\vee})}$ such that for any $u \in U$, we have $p(u) = 0$.
In other words, $u$ vanishes on the interior of the cone $C^{\vee}$.
Then for a nonzero $u$, two cases are possible: either $u|_{C^{\vee}} \equiv 0$, or $u$ takes values of different signs on the cone $C^{\vee}$.
The first case is impossible, because then $\sspan{\{u\}} \subset C^{\vee\vee} = C$, which contradicts the acuteness of the cone $C$.
Hence, the second case holds and $u \notin C^{\vee\vee} = C$.
Therefore, $C \cap U = 0$.
\end{proof}

\begin{lemma}
\label{lem-anh}
A left-invariant closed time orientation 1-form exists if and only if $[\g,\g]^0 \cap \interior{(C^+_{\id})^{\vee}} \neq \varnothing$.
\end{lemma}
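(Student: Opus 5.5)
The plan is to identify a left-invariant 1-form $\tau$ with its value $p = \tau_{\id} \in \g^*$, and to translate each of the two requirements, closedness and time orientation, into a condition on $p$. The lemma then becomes a restatement in the language of dual cones.

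\emph{Closedness.} For left-invariant vector fields $V, W$ the functions $\tau(V)$ and $\tau(W)$ are constant. The Cartan formula therefore reduces to $d\tau(V,W) = -\tau([V,W])$, which equals $\mp p([V_{\id},W_{\id}])$ depending on the sign convention. This is the same computation already used in the proof of Corollary~\ref{crl-exist}, run now in both directions. Left-invariant fields span every tangent space, so $d\tau = 0$ holds if and only if $p$ vanishes on $[\g,\g]$, that is, $p \in [\g,\g]^0$.

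\emph{Time orientation.} By condition~(1) of Theorem~\ref{th-exist} and Remark~\ref{rem-time-orientation}, a time orientation form must satisfy $\tau_x|_{C^+_x\setminus 0} > 0$. By left-invariance this is equivalent to $p|_{C^+_{\id}\setminus 0} > 0$. The cone $C^+_{\id}$ is acute by Definition~\ref{def-general-subLorentz}. Lemma~\ref{lem-ri} then says this condition is exactly $p \in \interior{((C^+_{\id})^{\vee})}$.

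Putting the two steps together, a left-invariant closed time orientation form exists if and only if some $p$ lies in $[\g,\g]^0 \cap \interior{((C^+_{\id})^{\vee})}$. For the forward direction one takes $p = \tau_{\id}$. For the converse one takes $\tau$ to be the left-invariant extension of such a $p$.

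As an optional remark, Lemma~\ref{lem-dual} applied with $U = [\g,\g]$ shows that the condition is also equivalent to $C^+_{\id} \cap [\g,\g] = 0$, which matches Corollary~\ref{crl-exist}. The main point of care is the closedness step. One must justify that checking $d\tau$ on left-invariant fields is enough; this holds because $d\tau$ is a tensor and these fields form a global frame.
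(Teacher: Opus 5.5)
Your proof is correct, and it takes a more direct route than the paper. The paper's proof is one line: the lemma ``immediately follows from Lemma~\ref{lem-dual} and Corollary~\ref{crl-exist}.'' That route passes through the primal condition $C^+_{\id}\cap[\g,\g]=0$. This is the hypothesis under which the proof of Corollary~\ref{crl-exist} builds a left-invariant closed time orientation form, and Lemma~\ref{lem-dual} with $U=[\g,\g]$ converts it to and from the dual condition $[\g,\g]^0\cap\interior{(C^+_{\id})^{\vee}}\neq\varnothing$.

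You instead describe the set of all admissible values $p=\tau_{\id}$ directly. The Cartan-formula computation, read in both directions, shows closedness is exactly $p\in[\g,\g]^0$. Lemma~\ref{lem-ri} shows the time-orientation condition is exactly $p\in\interior{(C^+_{\id})^{\vee}}$. The statement then becomes a tautology, and Lemma~\ref{lem-dual} is not needed, which avoids the paper's round trip from dual to primal and back.

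Your version also makes the ``only if'' direction explicit: closedness forces $p$ to vanish on $[\g,\g]$. The paper leaves this implicit, since Corollary~\ref{crl-exist} uses the computation only in the direction ``$\Ker\tau_{\id}\supset[\g,\g]$ implies closed.'' What the paper's route buys is the link to $C^+_{\id}\cap[\g,\g]=0$. That is the hypothesis actually needed when Lemma~\ref{lem-anh} and Corollary~\ref{crl-exist} are combined in Section~\ref{sec-solv}, and your optional remark via Lemma~\ref{lem-dual} recovers exactly that link.

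One small point: the inclusion $\{p \mid p|_{C\setminus 0}>0\}\subset\interior{(C^{\vee})}$ in Lemma~\ref{lem-ri} relies on a compactness argument, and the nonemptiness relies on $C^{\vee\vee}=C$. Both need $C$ closed and convex, not merely acute. You should therefore cite Definition~\ref{def-general-subLorentz} for closedness and convexity of $C^+_{\id}$ as well as for acuteness.
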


\begin{proof}
Immediately follows from Lemma~\ref{lem-dual} and Corollary~\ref{crl-exist}.
\end{proof}

\begin{proof}[Proof of assertion~\emph{(1)} of Theorems~\emph{\ref{th-exist3d}, \ref{th-exist-general}}]
We apply Corollary~\ref{crl-exist} to sub-Lorentzian structures from Table~\ref{tb-classification} in the case of solvable Lie algebras.
Let us describe the general scheme. The invariants $h, \kappa, \tau$ determine the table of commutators~\eqref{eq-commutators} for the generators $X_1, X_2, X_3$ of the Lie algebra.
The annihilator of the commutator subalgebra $[\g,\g]^0$ is determined by the kernel of the matrix of structure constants (see commutator relations~\eqref{eq-commutators})
$$
A = \left(
\begin{array}{ccc}
c & c_{13}^2 & 0 \\
c_{23}^1 & -c & 0 \\
c_{12}^1 & c_{12}^2 & 1 \\
\end{array}
\right).
$$
In our case, the interior of the dual cone $\interior{(C^+_{\id})^{\vee}}$ is given by the conditions $- x_1^2 + x_2^2 < 0$ and $x_1 > 0$, where
$x_1, x_2, x_3$ are coordinates associated with the basis $X_1, X_2, X_3 \in \g$.
The matrices of structure constants $A$ are calculated in paper~\cite{classification}. We give a reference to it in each case.

\emph{Case}~\ref{1}~\cite[\S\,5.2.1]{classification}. The longest arcs exist by Lemma~\ref{lem-anh} and Corollary~\ref{crl-exist}. Indeed,
$$
A = \left(
\begin{array}{ccc}
0 & 0 & 0 \\
0 & 0 & 0 \\
0 & 0 & 1 \\
\end{array}
\right), \qquad
\Ker{A} = \sspan{\{(1 \ 0 \ 0)^T, \ (0 \ 1 \ 0)^T\}}, \qquad (1 \ 0 \ 0)^T \in [\g,\g]^0 \cap \interior{(C^+_{\id})^{\vee}}.
$$

\emph{Case}~\ref{2suppl}~\cite[\S\,5.2.1]{classification}. The matrix of structure constants has the form ($\kappa = (c_{12}^2)^2 - (c_{12}^1)^2 \neq 0$)
$$
A = \left(
\begin{array}{ccc}
0 & 0 & 0 \\
0 & 0 & 0 \\
c_{12}^1 & c_{12}^2 & 1 \\
\end{array}
\right), \qquad
\Ker{A} = \sspan{\{(-c_{12}^2 \ c_{12}^1 \ 0)^T, \ (1 \ 0 \ -c_{12}^1)^T\}},
$$
then $(1 \ 0 \ -c_{12}^1)^T \in [\g,\g]^0 \cap \interior{(C^+_{\id})^{\vee}}$.
Therefore, the longest arcs exist by Lemma~\ref{lem-anh} and Corollary~\ref{crl-exist}.

\emph{Cases}~\ref{3}--\ref{5}, \ref{7}~\cite[\S\,5.2.2]{classification}. A left-invariant time orientation 1-form does not exist by Lemma~\ref{lem-anh}. Indeed,
$$
A = \left(
\begin{array}{rrr}
1 & 1 & 0 \\
-1 & -1 & 0 \\
\tau & \tau & 1 \\
\end{array}
\right) \text{ or }
\left(
\begin{array}{rrr}
1 & -1 & 0 \\
1 & -1 & 0 \\
\tau & -\tau & 1 \\
\end{array}
\right), \quad
\Ker{A} = \sspan{\{(1 \ \mp1 \ 0)^T\}}, \quad [\g,\g]^0 \cap \interior{(C^+_{\id})^{\vee}} = \varnothing.
$$
Our sufficient condition for existence of the longest arcs is not applicable in these cases.

\emph{Cases}~\ref{11}--\ref{12}~\cite[\S\,5.2.3]{classification}. If $\chi = \kappa$ ($\chi \neq 0$), then
$$
A = \left(
\begin{array}{ccc}
0 & 2\chi & 0 \\
0 & 0 & 0 \\
0 & 0 & 1 \\
\end{array}
\right), \qquad
\Ker{A} = \sspan{\{(1 \ 0 \ 0)^T\}}, \qquad (1 \ 0 \ 0)^T \in [\g,\g]^0 \cap \interior{(C^+_{\id})^{\vee}}.
$$
Hence, the longest arcs exist by Lemma~\ref{lem-anh} and Corollary~\ref{crl-exist}.

If $\chi = -\kappa$ ($\chi \neq 0$), then
$$
A = \left(
\begin{array}{rrr}
0 & 0 & 0 \\
-2\chi & 0 & 0 \\
0 & 0 & 1 \\
\end{array}
\right), \qquad
\Ker{A} = \sspan{\{(0 \ 1 \ 0)^T\}}, \qquad [\g,\g]^0 \cap \interior{(C^+_{\id})^{\vee}} = \varnothing.
$$
Our sufficient condition for existence of the longest arcs is not applicable in this case.

\emph{Cases}~\ref{13}--\ref{15}~\cite[\S\,5.2.3]{classification}. The longest arcs exist by Lemma~\ref{lem-anh} and Corollary~\ref{crl-exist}. We have
$$
A = \left(
\begin{array}{ccc}
0 & 2\chi & 0 \\
0 & 0 & 0 \\
0 & c_{12}^2 & 1 \\
\end{array}
\right), \ (c_{12}^2)^2 = \kappa - \chi, \qquad
\Ker{A} = \sspan{\{(1 \ 0 \ 0)^T\}}, \qquad (1 \ 0 \ 0)^T \in [\g,\g]^0 \cap \interior{(C^+_{\id})^{\vee}}.
$$

\emph{Cases}~\ref{16}--\ref{18}~\cite[\S\,5.2.3]{classification}. A left-invariant time orientation 1-form does not exist by Lemma~\ref{lem-anh}. Indeed,
$$
A = \left(
\begin{array}{rrr}
0 & 0 & 0 \\
-2\chi & 0 & 0 \\
c_{12}^1 & 0 & 1 \\
\end{array}
\right), \ (c_{12}^1)^2 = -\kappa - \chi, \qquad
\Ker{A} = \sspan{\{(0 \ 1 \ 0)^T\}}, \qquad [\g,\g]^0 \cap \interior{(C^+_{\id})^{\vee}} = \varnothing.
$$
Thus, our sufficient condition is not applicable in this case.
\end{proof}.

\section{\label{sec-sl2}Sub-Lorentzian structures on the universal cover of the Lie group $\SL_2(\R)$}

Let us describe the structure of the universal cover $G = \widetilde{\SL}_2(\R)$. Consider the following model of the Lie group $\SL_2(\R)$:
$$
\SL_2(\R) \backsimeq \SU_{1,1} = \left\{
\left(
\begin{array}{cc}
z & w \\
\bar{w} & \bar{z} \\
\end{array}
\right) \, \Bigm| \, z,w \in \CC, \ |z|^2 - |w|^2 = 1
\right\}.
$$
Then the universal cover is constructed as follows:
$$
\pi : G = \widetilde{\SL}_2(\R) \backsimeq \R \times \CC \rightarrow \SU_{1.1}, \quad
\pi(c, w) = \left(
\begin{array}{cc}
e^{ic}\sqrt{1 + |w|^2} & w \\
\bar{w} & e^{-ic}\sqrt{1 + |w|^2} \\
\end{array}
\right), \quad c \in \R, \, w \in \CC.
$$
From this construction, one can easily obtain formulas for multiplication $(c_1, w_1) \cdot (c_2, w_2) = (c, w)$ in the group $G$:
\begin{equation}
\label{eq-mult}
\begin{gathered}
c = c_1 + c_2 + \arctg{\frac{\Image{(w_1\bar{w}_2e^{-i(c_1+c_2)})}}{\sqrt{1+|w_1|^2}\sqrt{1+|w_2|^2} + \Real{(w_1\bar{w}_2e^{-i(c_1+c_2)})}}},\\
w = w_2\sqrt{1+|w_1|^2}e^{ic_1} + w_1\sqrt{1+|w_2|^2}e^{-ic_2}.\\
\end{gathered}
\end{equation}
The tangent space at the identity $\id = (0,0)$ of the group $G$ is isomorphic to $\g = \R \times \CC$, and the Killing quadratic form is proportional to the expression $-\xi^2 + |\zeta|^2$,
where $\xi \in \R$, $\zeta \in \CC$.
Consider a circular cone at the group identity, depending on a parameter $\eta \geqslant 0$
$$
C^+_{\eta} = \{(\xi,\zeta) \in \g \, | \, -\xi^2 + (\eta + 1)|\zeta|^2 \leqslant 0, \ \xi > 0\}.
$$
Note that if $\eta > 0$, then the cone $C^+_{\eta}$ lies inside the cone of the Killing form $C^+_0$.

We need to know how the left shift of the cone $C^+_{\eta}$ looks like.

\begin{lemma}
\label{lem-admissible-velocities}
For the left shift of the cone $C^+_{\eta}$, we have
$$
L_{(c,w) *} C^+_{\eta} = \left\{
\left(
\xi + \frac{\Image{w\bar{\zeta}e^{-ic}}}{\sqrt{1+|w|^2}}, \
\zeta\sqrt{1+|w|^2}e^{ic} - iw\xi
\right) \in \R \times \CC \, \Bigm| \, (\xi, \zeta) \in C^+_{\eta} \right\}.
$$
Here $L_{(c,w)}$ denotes the left shift by the element $(c,w) \in G$, and $L_{(c,w) *}$ is its differential.
\end{lemma}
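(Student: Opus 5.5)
The plan is to compute the differential of the left shift $L_{(c,w)}$ at the identity by differentiating the multiplication formula~\eqref{eq-mult} with respect to the second factor at $(c_2,w_2) = (0,0)$, in the direction $(\xi,\zeta) \in \g = \R \times \CC$. Concretely, I will substitute $c_1 = c$, $w_1 = w$, $c_2 = s\xi$, $w_2 = s\zeta$ into~\eqref{eq-mult} and take $\frac{d}{ds}\big|_{s=0}$ of both components. The lemma then follows by applying this linear map to every element of $C^+_{\eta}$.

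For the $w$-component, $w(s) = s\zeta\sqrt{1+|w|^2}e^{ic} + w\sqrt{1+s^2|\zeta|^2}\,e^{-is\xi}$. Since $\sqrt{1+s^2|\zeta|^2} = 1 + O(s^2)$, the derivative at $s=0$ is $\zeta\sqrt{1+|w|^2}e^{ic} - iw\xi$, which is the second entry in the claimed formula.

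For the $c$-component, $c(s) = c + s\xi + \arctg\bigl(N(s)/D(s)\bigr)$, where
$N(s) = \Image\bigl(w\,s\bar{\zeta}e^{-i(c+s\xi)}\bigr)$ and $D(s) = \sqrt{1+|w|^2}\sqrt{1+s^2|\zeta|^2} + \Real\bigl(w\,s\bar\zeta e^{-i(c+s\xi)}\bigr)$. At $s=0$ we have $N(0) = 0$ and $D(0) = \sqrt{1+|w|^2} > 0$, so the derivative of the $\arctg$ term is $N'(0)/D(0) = \Image(w\bar\zeta e^{-ic})/\sqrt{1+|w|^2}$. This gives the first entry $\xi + \Image(w\bar\zeta e^{-ic})/\sqrt{1+|w|^2}$.

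The only real obstacle is making sure the $\arctg$ expression in~\eqref{eq-mult} is the correct continuous branch near $s = 0$, so that it can be differentiated termwise. Since $D(s)>0$ for small $s$, the argument of $\arctg$ is near $0$ and no branch jump occurs. As a cross-check, I will verify that the formula agrees with the projection to $\SU_{1,1}$: compute $\pi(c,w)\cdot d\pi_{\id}(\xi,\zeta)$, using $d\pi_{\id}(\xi,\zeta) = \begin{pmatrix} i\xi & \zeta\\ \bar\zeta & -i\xi\end{pmatrix}$. Its off-diagonal entry is $e^{ic}\sqrt{1+|w|^2}\,\zeta - iw\xi$, which matches. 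For the diagonal entry, differentiating $e^{ic}\sqrt{1+|w|^2}$ and taking the imaginary part of its ratio to $e^{ic}\sqrt{1+|w|^2}$ reproduces the $c$-component.
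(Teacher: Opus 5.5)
Your proposal is correct and follows the paper's proof. Both differentiate the multiplication law~\eqref{eq-mult} in the second factor at the identity: you use the specific curve $s \mapsto (s\xi, s\zeta)$, while the paper uses an arbitrary curve with the same initial velocity, and both use $N(0)=0$ to reduce the $\arctg$ derivative to $N'(0)/D(0)$. Your cross-check via $\pi(c,w)\, d\pi_{\id}(\xi,\zeta)$ in $\SU_{1,1}$ is a valid additional confirmation that the paper does not include.
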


\begin{proof}
Let $(\xi, \zeta) \in C^+_{\eta}$. Consider a curve $(c_1(t), w_1(t)) \in G$, $t \in [0, 1]$ such that
$$
c_1(0) = 0, \qquad \dot{c}_1(0) = \xi, \qquad w_1(0) = 0, \qquad \dot{w}_1(0) = \zeta.
$$
It remains to compute the left shift of this curve using the multiplication rule~\eqref{eq-mult}, and then compute its differential at zero
$\frac{d}{dt}|_{t=0} (c, w) \cdot (c_1(t), w_1(t))$.

Indeed, let us introduce the following notation
$$
f(t) = \frac{\Image{\left(w\bar{w}_1(t)e^{-i(c + c_1(t))}\right)}}{\sqrt{1+|w|^2}\sqrt{1+|w_1(t)|^2} + \Real{\left(w\bar{w}_1(t)e^{-i(c+c_1(t))}\right)}}.
$$
Then since $c_1(0) = 0$ and $w_1(0) = 0$, we have
$$
\frac{d}{dt}\Bigm|_{t=0}f(t) = \frac{\Image{\left( w \dot{\bar{w}}_1(0) e^{-ic} \right)}}{\sqrt{1+|w|^2}}, \qquad
\frac{d}{dt}\Bigm|_{t=0}\arctg{f(t)} = \frac{1}{1 + f(0)^2} \cdot \frac{d}{dt}\Bigm|_{t=0}f(t) = \frac{d}{dt}\Bigm|_{t=0}f(t),
$$
since $f(0) = 0$. This gives the form of the first component of the vector $L_{(c,w) *}(\xi,\zeta)$.

We obtain the second component of this vector from the equality
$$
\frac{d}{dt}\Bigm|_{t=0} \left( w_1(t) \sqrt{1+|w|^2}e^{ic} + w\sqrt{1+|w_1(t)|^2}e^{-ic_1(t)} \right) =
$$
$$
= \dot{w}_1(0)\sqrt{1+|w|^2}e^{ic} + w\frac{\dot{w}_1(0)\bar{w}_1(0) + w_1(0)\dot{\bar{w}}_1(0)}{2\sqrt{1+|w_1(0)|^2}}e^{-ic_1(0)} +
w\sqrt{1+|w_1(0)|^2}e^{-ic_1(0)}(-i\dot{c}_1(0)),
$$
where $c_1(0) = 0$ and $w_1(0) = 0$.
\end{proof}

\begin{theorem}
\label{th-sl2}
Consider a left-invariant generalized sub-Lorentzian structure on the group $G = \widetilde{\SL}_2(\R)$ given by a cone $C^+ \subset \g$.
If the cone $C^+$ lies inside the cone of the Killing form $C^+_0$, i.e., $C^+ \setminus 0 \subset \interior{C^+_0}$,
then for any attainable point there exists the longest arc leading to it.
\end{theorem}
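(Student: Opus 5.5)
The plan is to apply Theorem~\ref{th-exist} directly, using a closed but \emph{non-left-invariant} time orientation form, as anticipated in Remark~\ref{rem-solv-semisimple}. In the model $G \backsimeq \R \times \CC$ with global coordinates $(c,w)$, the function $c$ is a smooth global coordinate. I take $\tau = K\,dc$ for a constant $K>0$ to be chosen later. This form is exact, so condition~(3) holds trivially. Since $G$ is diffeomorphic to $\R^3$, we also have $H^1(G)=0$. As in the proof of Corollary~\ref{crl-exist}, I equip $G$ with a left-invariant Riemannian metric, which is complete; the regularity of the structure is automatic by left-invariance. It remains to check conditions~(1) and~(2). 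I read condition~(2) as required for $v \in C^+_x\setminus 0$, which is the case relevant in~\cite{lokutsievskiy-podobryaev}.

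First, I would reduce to a round cone. The set $C^+\setminus 0$ lies in $\interior{C^+_0}$, and by homogeneity it suffices to look at the compact section $\{\xi = 1\}\cap C^+$, on which the continuous function $\xi^2/|\zeta|^2$ is strictly greater than $1$. Hence there is $\eta>0$ with $C^+ \subset C^+_\eta$, so that $|\zeta| \leqslant \xi/\sqrt{1+\eta}$ for every $(\xi,\zeta)\in C^+$. By Remark~\ref{rem-small-big} one could even replace $C^+$ by $C^+_\eta$, but this is not needed.

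Next, I would compute $dc$ on the shifted cone using Lemma~\ref{lem-admissible-velocities}. The vector $L_{(c,w)*}(\xi,\zeta)$ has $c$-component
$$
\xi + \frac{\Image{(w\bar\zeta e^{-ic})}}{\sqrt{1+|w|^2}} \;\geqslant\; \xi - \frac{|w|}{\sqrt{1+|w|^2}}\,|\zeta| \;\geqslant\; \xi\Bigl(1 - \frac{1}{\sqrt{1+\eta}}\Bigr).
$$
The key point is that $|w|/\sqrt{1+|w|^2}<1$ uniformly in $w$, so the lower bound does not depend on the point $(c,w)$. Since $\xi>0$ on $C^+\setminus 0$, condition~(1) follows.

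For condition~(2), I would use left-invariance of the Riemannian norm together with the left-invariant norm $\sqrt{\xi^2+|\zeta|^2}$ at $\id$. This gives $|L_{(c,w)*}(\xi,\zeta)| = \sqrt{\xi^2+|\zeta|^2} \leqslant \xi\sqrt{1+1/(1+\eta)}$. Combining this with the lower bound on $dc$ above, the ratio $|v|/dc(v)$ is bounded on $C^+_x\setminus 0$ by a constant $K$ independent of $x$. Choosing $\tau = K\,dc$ then gives $|v|/\tau_x(v)\leqslant 1 \leqslant 1+\dist_R(x_0,x)$. Theorem~\ref{th-exist} then yields the claim for $x_0=\id$, and by left-invariance for arbitrary $x_0$.

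The only delicate step is the uniform positivity of $dc$ on the shifted cones. This is exactly where the strict inclusion $C^+\setminus 0\subset\interior{C^+_0}$ is used: for the Killing cone itself ($\eta=0$), the bound degenerates as $|w|\to\infty$, and the constant $K$ would blow up.
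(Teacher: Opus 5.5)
Your proof is correct. Its skeleton is the paper's: Theorem~\ref{th-exist} with the exact form $dc$, the shifted cones from Lemma~\ref{lem-admissible-velocities}, and the uniform lower bound $dc(v)\geqslant \xi\bigl(1-\frac{1}{\sqrt{1+\eta}}\bigr)$. That bound comes from $|w|/\sqrt{1+|w|^2}<1$ together with $\eta>0$, and it is the paper's constant $C_1$.

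The genuine difference is the complete Riemannian metric you use for condition~(2).

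\begin{itemize}
\item The paper takes the Euclidean metric $\sqrt{c^2+|w|^2}$ in the coordinates $(c,w)$. There the $w$-component $\zeta\sqrt{1+|w|^2}e^{ic}-iw\xi$ of a shifted admissible velocity grows linearly in $|w|$. So the paper needs a triangle-inequality estimate and actually uses the sublinear allowance $1+\dist_R(x_0,x)$. It ends with the bound $A+B|w|$, plus an implicit rescaling of $\tau$ to bring this into the form of condition~(2).
\item You take a left-invariant metric instead, so $|L_{(c,w)*}(\xi,\zeta)|$ does not depend on the base point. Condition~(2) then reduces to the uniform positivity you already proved for condition~(1). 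You obtain the stronger bound $|v|/\tau_x(v)\leqslant 1$ and make the rescaling $\tau=K\,dc$ explicit. You need neither the distance function nor a growth estimate, and completeness comes for free as in Corollary~\ref{crl-exist}.
\end{itemize}

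The paper's choice buys only an explicit formula for $\dist_R$. Your choice isolates the real content of the hypothesis: $dc$ is uniformly positive on the cones normalized by a left-invariant norm, and this is exactly what fails for $\eta=0$.

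A cosmetic point: when choosing $\eta$, maximize $|\zeta|^2/\xi^2<1$ over the compact section $\{\xi=1\}\cap C^+$. Do not minimize $\xi^2/|\zeta|^2$, which is undefined at $\zeta=0$.
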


\begin{proof}
First note that there exists $\eta > 0$ such that $C^+ \subset C^+_{\eta} \subset C^+_0$,
By Remark~\ref{rem-small-big}, it is sufficient to prove the existence of the longest arcs for the structure generated by the cone $C^+_{\eta}$.
We choose a suitable time orientation 1-form $\tau$ to apply the sufficient condition for existence of the longest arcs (Theorem~\ref{th-exist}).
Recall that by Remark~\ref{rem-solv-semisimple} the 1-form cannot be left-invariant. Put $\tau = dc$. Obviously, this form is closed.
It remains to check conditions~(1)--(2) of Theorem~\ref{th-exist} with respect to the standard Euclidean structure $\sqrt{c^2 + |w|^2}$ on the space $\R \times \CC$.

It follows from Lemma~\ref{lem-admissible-velocities} that for $v = L_{(c,w) *} (\xi, \zeta)$ we have
$$
\tau_{(c,w)}(v) = \frac{\xi\sqrt{1+|w|^2} + \Image{w\bar{\zeta}e^{-ic}}}{\sqrt{1+|w|^2}}, \qquad \text{where} \qquad  \xi^2 \geqslant (\eta + 1)|\zeta|^2.
$$
This implies the bounds
$$
\frac{\xi\left(\sqrt{1+|w|^2} - \frac{|w|}{\sqrt{\eta+1}}\right)}{\sqrt{1+|w|^2}}
\leqslant \tau_{(c,w)}(v) \leqslant
\frac{\xi\left(\sqrt{1+|w|^2} + \frac{|w|}{\sqrt{\eta+1}}\right)}{\sqrt{1+|w|^2}}.
$$
Moreover, the lower of these bounds is greater than zero, since $\xi > 0$ and $\sqrt{1+|w|^2} - \frac{|w|}{\sqrt{\eta+1}} > 0$, because $\eta > 0$.
Thus, $\tau_{(c,w)}(v) > 0$ for any $(c,w)$ and $v \in L_{(c,w) *} C^+_{\eta}$.
Hence, condition~(1) of Theorem~\ref{th-exist} is satisfied.

To check the sublinear growth condition~(2) of the same theorem, it is sufficient to prove that
if $v \in L_{(c,w) *} C^+_{\eta}$, then $\frac{|v|}{\tau_{(c,w)}(v)} \leqslant A + B\sqrt{c^2 + |w|^2}$ for some constants $A, B > 0$.
Note that the expression $\frac{\sqrt{1+|w|^2} \pm \frac{|w|}{\sqrt{\eta+1}}}{\sqrt{1+|w|^2}}$ tends to $1 \pm \frac{1}{\sqrt{\eta+1}}$ while $|w| \rightarrow +\infty$.
These limits are positive, since $\eta > 0$.
Therefore, there exist constants $C_1, C_2 > 0$ such that  $C_1 < \frac{\sqrt{1+|w|^2} \pm \frac{|w|}{\sqrt{\eta+1}}}{\sqrt{1+|w|^2}} < C_2$.
Hence,
\begin{equation}
\label{eq-estimation}
\frac{|v|}{\tau_{(c,w)}(v)} < \frac{\sqrt{C_2^2\xi^2 + |\zeta\sqrt{1+|w|^2}e^{ic} - iw\xi|^2}}{C_1\xi} =
\frac{1}{C_1}\sqrt{C_2^2 + \left|\frac{\zeta}{\xi}\sqrt{1+|w|^2}e^{ic} - iw\right|^2}.
\end{equation}
Using the estimate
$$
\left|\frac{\zeta}{\xi}\sqrt{1+|w|^2}e^{ic} - iw\right| \leqslant \left|\frac{\zeta}{\xi}\sqrt{1+|w|^2}e^{ic}\right| + |iw| \leqslant
$$
$$
\leqslant \frac{\sqrt{1+|w|^2}}{\sqrt{\eta+1}} + |w| \leqslant \frac{1+|w|}{\sqrt{\eta+1}} + |w|,
$$
we get
$$
\frac{|v|}{\tau_{(c,w)}(v)} < \frac{1}{C_1}\sqrt{C_2^2 + \left(\frac{1+|w|}{\sqrt{\eta+1}} + |w|\right)^2} =
\frac{C_2}{C_1}\sqrt{1 + \frac{1}{C_2^2}\left(\frac{1+|w|}{\sqrt{\eta+1}} + |w|\right)^2} \leqslant
$$
$$
\leqslant \frac{C_2}{C_1}\left(1 + \frac{1}{C_2}\left(\frac{1+|w|}{\sqrt{\eta+1}} + |w|\right)\right) = A + B|w|,
$$
where
$$
A = \frac{1}{C_1} \left(\frac{1}{\sqrt{\eta+1}} + C_2 \right) > 0, \qquad B = \frac{1}{C_1} \left(\frac{1}{\sqrt{\eta+1}} + 1 \right) > 0.
$$
It remains to apply Theorem~\ref{th-exist}.
\end{proof}

\begin{proof}[Proof of assertion~\emph{(2)} of Theorems~\emph{\ref{th-exist3d}, \ref{th-exist-general}}]
Consider \emph{case}~\ref{10} from Table~\ref{tb-classification}.
The matrix of structure constants has the form~\cite[\S\,5.2.3]{classification}
$$
A = \left(
\begin{array}{ccc}
0 & \kappa+\chi & 0 \\
\kappa - \chi & 0 & 0 \\
0 & 0 & 1 \\
\end{array}
\right), \qquad \kappa^2 - \chi^2 > 0.
$$
It is easy to see that the Killing form $K(V,W) = \tr{(\ad{V} \cdot \ad{W})}$ in coordinates $x_1,x_2,x_3$ associated with the basis $X_1, X_2, X_3$ has the form
$$
K(x_1,x_2,x_3) = 2(\kappa + \chi) x_1^2 - 2(\kappa - \chi) x_2^2 + 2(\kappa^2 - \chi^2) x_3^2.
$$
Consider the segment $I = \{(x_1, x_2, x_3) \in \g \, | \, x_1 = 1, \, x_2 \in [-1,1], \, x_3 = 0\}$ (a cross-section of the cone $C^+$).
Clearly, the cone $C^+$ lies inside the cone of the Killing form if and only if $K|_I < 0$, i.e.,
$$
K|_I(x_2) = - 2(\kappa - \chi) x_2^2 + 2(\kappa + \chi) < 0 \qquad \text{for any} \qquad  x_2 \in [-1,1].
$$
Since in case~\ref{10} $\kappa \neq \chi$, the following two situations are possible.

If the leading coefficient $-2(\kappa - \chi) < 0$, then the inequality $K|_I < 0$ holds if and only if $K|_I(0) = 2(\kappa + \chi) < 0$.
Hence, $\chi < \kappa < -\chi$, but this contradicts the condition of case~\ref{10}, namely $|\kappa| > -\chi$.

If the leading coefficient $-2(\kappa - \chi) > 0$, then the inequality $K|_I < 0$ holds if and only if
$K|_I(\pm 1) = -2(\kappa - \chi) + 2(\kappa + \chi) = 4\chi < 0$.
Hence, $\kappa < \chi < 0$, and the cone $C^+$ lies inside the cone of the Killing form when this condition is satisfied.
It remains to apply Theorem~\ref{th-sl2}.

In the remaining cases of Lie algebras $\sll_2(\R)$ from Table~\ref{tb-classification}, the cone $C^+$ intersects the boundary of the cone of the Killing form.
So, the existence condition from Theorem~\ref{th-sl2} is not applicable.

Indeed, in \emph{case}~\ref{2} the matrix of structure constants and the Killing form have the form~\cite[\S\,5.2.1]{classification}
$$
A = \left(
\begin{array}{ccc}
0 & \kappa & 0 \\
\kappa & 0 & 0 \\
0 & 0 & 1 \\
\end{array}
\right), \qquad \kappa = \gamma \neq 0, \qquad
K(x_1,x_2,x_3) = 2\kappa x_1^2 - 2\kappa x_2^2 + 2 \kappa^2 x_3^2.
$$
Repeating the reasoning in the analysis of case~\ref{10} above, we obtain that the cone $C^+$ lies inside the cone of the Killing form if and only if
$$
K|_I < 0 \qquad \Leftrightarrow \qquad
\left\{
\begin{array}{l}
-2\kappa < 0, \\
K|_I (0) = 2\kappa < 0, \\
\end{array}
\right. \qquad \text{or} \qquad
\left\{
\begin{array}{l}
-2\kappa > 0, \\
K|_I(\pm 1) = -2\kappa + 2\kappa = 0 < 0, \\
\end{array}
\right.
$$
but this never holds.

For \emph{cases}~\ref{6}, \ref{8}, we have the following matrix of structure constants and the Killing form~\cite[\S\,5.2.2]{classification}
$$
A = \left(
\begin{array}{ccc}
1 & \kappa \mp 1 & 0 \\
\kappa \pm 1 & -1 & 0 \\
0 & 0 & 1 \\
\end{array}
\right), \qquad K(x_1,x_2,x_3) = 2(\kappa \mp 1)x_1^2 - 4x_1x_2 -2(\kappa \pm 1)x_2^2 + 2\kappa^2 x_3^2.
$$
Whence, the conditions $K|_I(-1) = 4 \mp 4 < 0$ and $K|_I(1) = -4 \mp 4 < 0$ obviously do not hold.

For \emph{case}~\ref{19}, we have~\cite[\S\,5.2.4]{classification}
$$
A = \left(
\begin{array}{ccc}
\chi & \kappa & 0 \\
\kappa & -\chi & 0 \\
0 & 0 & 1 \\
\end{array}
\right), \qquad \chi \neq 0, \qquad
K(x_1,x_2,x_3) = 2\kappa x_1^2 - 4\chi x_1x_2 - 2\kappa x_2^2 + 2(\kappa^2 + \chi^2) x_3^2.
$$
Thus, the conditions
$$
K|_I(-1)= -2\kappa + 4\chi + 2\kappa = 4\chi < 0, \qquad
K|_I(1) = -2\kappa - 4\chi + 2\kappa = -4\chi < 0
$$
cannot hold simultaneously.
\end{proof}

\medskip

\textbf{Acknowledgments.}
The author is grateful to the anonymous referee for pointing out inaccuracies in the first version of the paper.

\end{document}